\newtheorem{corollary}{Corollary}[section]
\newtheorem{lemma}[corollary]{Lemma}
\newtheorem{proposition}[corollary]{Proposition}
\newtheorem{remark}[corollary]{Remark}
\newtheorem{theorem}[corollary]{Theorem}
\newfont{\sBlackboard}{msbm10 scaled 900}
\newcommand{\dd}     {{\rm d}}
\newcommand{\mylabel}[1]{\label{#1}
            \ifx\undefined\stillediting
            \else \fbox{$#1$}\fi }
\newcommand{\BE}{\begin{equation}}
\newcommand{\BEQ}[1]{\BE\mylabel{#1}}
\newcommand{\EEQ}{\end{equation}}
\newcommand{\rfb}[1]{\mbox{\rm
   (\ref{#1})}\ifx\undefined\stillediting\else:\fbox{$#1$}\fi}
\newcommand{\half}   {{\frac{1}{2}}}
\newfont{\Blackboard}{msbm10 scaled 1200}
\newcommand{\bl}[1]{\mbox{\Blackboard #1}}
\newfont{\roma}{cmr10 scaled 1200}
\def\CC{\rm \hbox{C\kern-.56em\raise.4ex
         \hbox{$\scriptscriptstyle |$}\kern+0.5 em }}
\newcommand{\rline}  {{\mathbb R}}
\newcommand{\zline}  {{\bl Z}}
\def\b{\beta}
\def\cH{{\cal H}}
\newcommand{\Dscr} {{\cal D}}
\newcommand{\mm}    {{\hbox{\hskip 0.5pt}}}
\newcommand{\m}     {{\hbox{\hskip 1pt}}}
\newcommand{\bluff} {{\hbox{\raise 15pt \hbox{\mm}}}}
\newcommand{\FORALL} {{\hbox{$\hskip 11mm \forall \;$}}}
\newcommand{\rarrow} {{\,\rightarrow\,}}
\def\section{\@startsection {section}{1}{\z@}{-3.5ex plus -1ex minus
    -.2ex}{2.3ex plus .2ex}{\large\bf}}
\def\be{\begin{equation}}
\def\ee{\end{equation}}
\def\ds{\displaystyle}
\begin{document}
\thispagestyle{empty}
\title{\bf Stabilization of abstract thermo-elastic semigroup \footnote{Research supported by the IRD and Laboratoire de
Math\'ematiques et Dynamique de Populations, Universit\'e Cadi
Ayyad, Marrakech and by CMPTM-Research project : 10/TM /37.}}
\author{E. M. Ait Ben Hassi $^\ddag$, \, K. Ammari \thanks{UR Analyse et Contr\^ole des Edp (05/UR/15-01), D\'epartement de Math\'ematiques,
Facult\'e des Sciences de Monastir, Universit\'e de Monastir, 5019 Monastir, Tunisie,
e-mail~: kais.ammari@fsm.rnu.tn} ,  \, S. Boulite $^\ddag$ \, and \, L. Maniar
\thanks{
Universit\'e Cadi
Ayyad,  Facult\'e des Sciences Semlalia, LMDP, UMMISCO (IRD- UPMC), 
Marrakech 40000, B.P. 2390, Maroc, e-mail:
m.benhassi@uca.ma,  sboulite@uca.ma, 
maniar@uca.ma}}
\date{}
\maketitle
{\bf Abstract.} {\small
In this paper we characterize the stabilization for some thermo-elastic type system with Cattaneo law and we prove that the exponential or polynomial stability of this system implies a polynomial stability of the correspond thermoelastic system with the Fourier law. The proof of the main results uses, respectively, the methodology introduced in 
Ammari-Tucsnak \cite{ammari}, where the exponential stability for the closed loop problem is reduced to an observability
estimate for the corresponding uncontrolled system, and a characterization of the polynomial stability for a $C_0$-semigroup, in a Hilbert space,  by a polynomial estimation of the  resolvante of its generator obtained by Borichev-Tomilov \cite{tomilov}. An illustrating examples are given.}

\noindent
{\bf AMS subject classification (2010)}: 35B40, 47D06, 93B07, 93C25, 93D15.\\
{\bf Keywords}: abstract thermo-elastic system, Cattaneo law, Fourier law, exponential stability, observability inequality, polynomial stability.
\section{Introduction and main results} \label{intro}
Let $H_i$ be a Hilbert space equipped with the norm $\|\cdot\|_{H_i}, i=1,2$, and let
$A_1 :\Dscr(A_1) \subset H_1 \rightarrow H_1$ and $A :\Dscr(A) \subset H_2 \rightarrow H_2$ are positive self-adjoint   operators.

 We introduce the scale of Hilbert
spaces $H_{1,\alpha}$, $\alpha\in\rline$, as follows\m: for every
$\alpha\geq 0$, $H_{1,\alpha}=\Dscr(A_1^{\alpha})$, with the norm
$\|z \|_{1,\alpha}=\|A_1^\alpha z\|_{H_1}$ and $H_{2,\alpha}=\Dscr(A^{\alpha}),$ with the norm
$\|z \|_{2,\alpha}=\|A^{\alpha} z\|_{H_2}$. The space $H_{i,-\alpha}$ is
defined by duality with respect to the pivot space $H_i$ as
follows\m: $H_{i,-\alpha} =H_{i,\alpha}^*,$ for $\alpha>0, \,i=1,2$. The
operators $A_1$ and $A$ can be extended (or restricted) to each $H_{i,\alpha}$,
such that it becomes a bounded operator
\BEQ{A0ext}
A_1 : H_{1,\alpha} \rarrow H_{1,\alpha - 1},    \\
A : H_{2,\alpha} \rarrow H_{2,\alpha - 1},  \FORALL \alpha\in\rline.
\EEQ
We assume that the operator $A$ can be written as $A= A_2A_2^*$, 
where $A_2 \in {\cal L}(H_{1},H_{2, - \half})$,  which can be extended (or restricted) to $H_{1,\alpha}$,
such that it becomes an operator of ${\cal L}(H_{1,\alpha},H_{2,\alpha - \half}), \, \alpha \in \rline$,  and 
$A_2^* \in {\cal L}(H_{2},H_{1, -\half})$,  which can be extended (or restricted) to $H_{2,\alpha}$,
such that it becomes an operator of ${\cal L}(H_{2,\alpha},H_{1,\alpha - \half}), \, \alpha \in \rline.$
Let $C \in {\cal L}(H_{2},H_{1, - \half})$ and $C^* \in {\cal L}(H_{1,\half},H_{2})$,  which can be extended or restricted to $H_{2,\alpha}, H_{1,\alpha}$, such that it belongs to  ${\cal L}(H_{2,\alpha},H_{1,\alpha - \half}), {\cal L}(H_{1,\alpha},H_{2,\alpha - \half}), \, \alpha \in \rline,$ respectively. We denote by $H_1^\tau$ the space  $H_1$ equipped with the inner product $<u,v>_{H_1^\tau } = \tau \, <u,v>_{H_1}~, \, \, u,v \in H_1.$

We consider the following abstract thermo-elastic system with Cattaneo law 
\BEQ{damped1}
\ddot w_1(t) + A_1 w_1(t)  + C w_2(t)=\m 0, \,
\EEQ
\BEQ{damped2}
\dot w_2(t) + A_2 w_3(t) - C^* \dot w_1(t)=\m 0,
\EEQ
\BEQ{damped3}
\tau \, \dot w_3(t) + w_3 - A_2^* w_2(t) = \m 0,
\EEQ
\BEQ{output}
w_1(0) \m=\m w_1^0,  \m \dot w_1(0)=w_1^1,  \,
w_2(0) \m=\m w_2^0, \, w_3 (0) = w_3^0,
\EEQ
where $\tau >0$ is a constant and $t \in [0,\infty)$ is the time. The equations \rfb{damped1}- \rfb{damped3} are understood as equations in $H_{1,-\half}, H_{2,-\half}$ and $H_{1, - \half}$, respectively, i.e., all the terms
are in $H_{1,-\half}$, $H_{2,-\half}$ and $H_{1, - \half}$, respectively. We show the well-posedness of the 
 abstract  system \eqref{damped1}-\eqref{output} in the space ${\cal H} = H_{1,\half} \times H_1 \times H_2 \times H_1^\tau$.   Moreover, one can see that for regular solutions, the  energy of this system defined by  
$$
E(t) = \frac{1}{2} \, \left\|(w_1,\dot w_1,w_2,w_3) \right\|^2_{{\cal H_{\tau}}}, \,  \, t \geq 0,
$$
satisfies the following equality
\be
E(0) - E(t) = 
 \int_0^t
\left\|w_3(s)\right\|_{H_1}^2\dd s, \,  \, t \geq 0.
\label{ESTEN}
\ee
The aim of this paper is to show first that the exponential and polynomial decay of the energy $E(t)$ is reduced to an observability inequality for a corresponding conservative adjoint system, as in  \cite{Benhassi1, Benhassi2, ammari, haraux}. 

For $\tau=0$, the thermo-elastic problem with Cattaneo law \eqref{damped1}-\eqref{output} is just the following  classical thermo-elastic system (with Fourier law)   

\BEQ{damped11}
\ddot w_1(t) + A_1 w_1(t)  + C w_2(t)=\m 0, \,
\EEQ
\BEQ{damped21}
\dot w_2(t) + A w_2(t) - C^* \dot w_1(t)=\m 0,
\EEQ
\BEQ{output11}
w_1(0) \m=\m w_1^0,  \m \dot w_1(0)=w_1^1,  \,
w_2(0) \m=\m w_2^0, 
\EEQ
whose the energy
$$
E_0(t) = \frac{1}{2} \,  \left\|(w_1,\dot w_1,w_2) \right\|^2_{{\cal H}_0}, \, \, t \geq 0,
$$
where ${\cal H}_0:=H_{1,\half} \times H_1 \times H_2$, 
satisfies  the energy equality
\be
E_0(0) - E_0(t) = 
 \int_0^t
\left\|A_2^* w_2(s)\right\|_{H_2}^2\dd s,   \; \, t\geq 0.
\label{ESTENb}
\ee 
The second main result in this paper is to show that the exponential and polynomial decay of the energy $E$ of the abstract thermo-elastic system with Cattaneo law provides a polynomial decay of the energy $E_0$ of  the  classical thermo-elastic system \eqref{damped11}-\eqref{output11}. This is done by a spectral technic using a recent caracterization of polynomial stability of $C_0$-semigroups in Hilbert spaces due Borichev-Tomilov \cite{tomilov}.

Consider now the conservative adjoint  problem
\be
\ddot \phi_1(t) + A_1 \phi_1(t) + C \phi_2(t) = 0,
\label{eq3b}
\ee
\be
\label{eq3bb}
\dot \phi_2(t) + A_2 \phi_3 (t) - C^* \dot \phi_1(t) =0
\ee
\be
\label{eq3bb3}
\tau \, \dot \phi_3(t) - A_2^* \phi_2 (t) =0
\ee
\be 
\phi_1(0) = \phi_1^0, \, \dot \phi_1(0) = \phi_1^1, \, \phi_2(0) = \phi_2^0, \, \phi_3(0) = \phi^0_3,
\label{eq4b}
\ee
and the unbounded linear operators
\be
\label{opn}
{\cal A}_d : {\cal D}({\cal A}_d) \subset {\cal H} \rightarrow {\cal H}, \,
{\cal A}_d = \left(
\begin{array}{cccc}
0 &  I &  0  & 0\\
- A_1 & 0 & - C &  0\\
0 & C^* & 0 & - A_2\\
0 & 0 &  \frac{1}{\tau} \, A_2^* & - \frac{1}{\tau} I
\end{array}
\right),
\ee

\be
\label{opnx}
{\cal A}_c : {\cal D}({\cal A}_c) \subset {\cal H} \rightarrow {\cal H}, \,
{\cal A}_c = \left(
\begin{array}{cccc}
0 &  I &  0  & 0\\
- A_1 & 0 & - C &  0\\
0 & C^* & 0 & - A_2\\
0 & 0 &  \frac{1}{\tau} \, A_2^* & 0
\end{array}
\right),
\ee

\be
\label{opnv}
{\cal A} : {\cal D}({\cal A}) \subset {\cal H}_0 \rightarrow {\cal H}_0, \,
{\cal A} = \left(
\begin{array}{cccc}
0 &  I &  0  \\
- A_1 & 0 & - C \\
0 & C^* & - A
\end{array}
\right),
\ee
where
$$
{\cal D}({\cal A}_d) = {\cal D}({\cal A}_c) = H_{1,1} \times H_{1,\half} \times H_{2,\half} \times H_{1,\half},
$$
and
$$
{\cal D}({\cal A}) = H_{1,1} \times H_{1,\half}\times H_{2,1}.
$$
We transform the system \rfb{damped1}-\rfb{output} into a first-order system of evolution equation type.  For this,  let $W : = \left(w_1, \dot{w}_1,w_2,w_3 \right), \, W(0) = W^0 := \left(w_1^0,w_1^1,w_2^0,w_3^0 \right)$. Then, $W$ satisfies 
$$
\dot{W}(t) = {\cal A}_d W(t), \, t\geq 0, \quad W(0) = W^0.
$$
For the polynomial energy decay of the classical thermo-elastic system, we assume also the following assumption: \\
{\bf Assumption H}.
$i \, \rline \subset \rho({\cal A})$, where ${\cal A}$ is the operator defined by \rfb{opnv} and $\rho({\cal A})$ is the resolvent set of ${\cal A}$.

The main result of this paper is the following theorem.
\begin{theorem} \label{obscoup}
\begin{enumerate}
\item
The system described by \eqref{damped1}-\eqref{output} is
exponentially stable in ${\cal H}$ if and only if there exists $T,C > 0$ such that
$$
\int_{0}^{T} ||\phi_3(t)||^2_{H_1} \m\dd t \asymp \,
||(\phi^0_1,\phi^1_1,\phi_2^0,\phi_3^0)||^2_{{\cal H}}
$$
\be
\FORALL
(\phi^0_1,\phi^1_1,\phi^0_2,\phi_3^0) \in {\cal H}.
\label{CONDUNb}\ee
\item
If the system described by \eqref{damped1}-\eqref{output} is
exponentially stable in ${\cal H}$ then $(w_1,\dot{w}_1,w_2)$ solution of \eqref{damped11}-\eqref{output11} is polynomially stable for all initial data in $H_{1,1} \times H_{1,\half} \times H_{2,1}$, i.e., there exists a constant $C > 0$ such that for all $(w_1^0,w_1^1,w_2^0) \in {\cal D}({\cal A})$ we have 
\be
\label{polyest}
\left\|(w_1(t),\dot{w}_1(t),w_2(t)\right\|_{{\cal H}_0} \leq  \frac{C}{\sqrt{t}} \, \left\|(w_1^0,w_1^1,w_2^0)\right\|_{{\cal D}({\cal A})}, \, \forall \, t >0.
\ee
\item
If there exist $\alpha, T,C > 0$ such that
\be\label{CONDUNbx}
\int_{0}^{T} ||\phi_3(t)||^2_{H_1} \m\dd t \asymp \,
||(\phi^0_1,\phi^1_1,\phi_2^0,\phi_3^0)||^2_{{\cal H}_{-\alpha}}
\ee
for all $
(\phi^0_1,\phi^1_1,\phi^0_2,\phi_3^0) \in {\cal H}_{-\alpha} = H_{1,- \frac{\alpha - 1}{2}} \times H_{1,- \frac{\alpha}{2}} \times H_{2,- \frac{\alpha}{2}} \times H_{1, - \frac{\alpha}{2}}$
then, there exists a constant $C > 0$ such that for all $(w_1^0,w_1^1,w_2^0,w_3^0) \in {\cal D}({\cal A}_d)$ we have
\be
\label{polyestx}
E(t) \leq  \frac{C}{t^{\frac{1}{\alpha}}} \, \left\|(w_1^0,w_1^1,w_2^0,w_3^0)\right\|_{{\cal D}({\cal A}_d)}^2, \, \forall \, t >0.
\ee
\item
If the solution of the system described by \eqref{damped1}-\eqref{output} satisfies  \rfb{polyestx} then the  solution of \eqref{damped11}-\eqref{output11}  satisfies 
\be
\label{polyestxx}
E_0(t) \leq  \frac{C}{t^{\frac{1}{\alpha + 1}}  } \, \left\|(w_1^0,w_1^1,w_2^0)\right\|_{{\cal D}({\cal A})}^2, \, \forall \, t >0
\ee
for some    constant $C > 0$  and  all $(w_1^0,w_1^1,w_2^0) \in {\cal D}({\cal A})$.
\end{enumerate}
\end{theorem}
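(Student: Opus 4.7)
The plan is to handle the four assertions in two groups that share complementary techniques. Parts (1) and (3) are observability-based, following the Ammari--Tucsnak methodology, while parts (2) and (4) rely on a spectral comparison between ${\cal A}_d$ and ${\cal A}$ combined with the Borichev--Tomilov resolvent characterization of polynomial decay.

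For parts (1) and (3), the first step is to write ${\cal A}_d = {\cal A}_c - BB^*$, where $B^*W = \tau^{-1/2} w_3$ reads off the dissipative component of the energy identity \eqref{ESTEN}. The conservative adjoint problem \eqref{eq3b}--\eqref{eq4b} is then the system generated by ${\cal A}_c$. The observability inequalities \eqref{CONDUNb} and \eqref{CONDUNbx} are precisely the Hautus-type conditions from \cite{Benhassi1,Benhassi2,ammari}: \eqref{CONDUNb} translates, via the Huang--Pr\"uss theorem, into the uniform resolvent bound $\|R(i\lambda,{\cal A}_d)\| \le C$ on $i\rline$ and hence into exponential stability, while \eqref{CONDUNbx}, combined with the standard $BB^*$ perturbation identity, yields the polynomial resolvent bound $\|R(i\lambda,{\cal A}_d)\| \le C|\lambda|^{2\alpha}$, from which Borichev--Tomilov \cite{tomilov} delivers \eqref{polyestx}.

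For parts (2) and (4), the key is a lifting map from the Fourier resolvent equation to the Cattaneo one. Given $G=(g_1,g_2,g_3)\in{\cal H}_0$ and $V=(w_1,\dot w_1,w_2)\in{\cal D}({\cal A})$ with $(i\lambda-{\cal A})V=G$, I would set
\[
W := (w_1,\dot w_1,w_2,\,A_2^* w_2) \in {\cal D}({\cal A}_d).
\]
A direct substitution into the rows of \eqref{opn} shows
\[
(i\lambda - {\cal A}_d)\, W \;=\; (g_1,g_2,g_3,\, i\tau\lambda\, A_2^* w_2) \;=:\; F.
\]
Since $\|V\|_{{\cal H}_0}\le\|W\|_{\cal H}$ and $\|F\|_{\cal H}\le\|G\|_{{\cal H}_0}+C|\lambda|\,\|A_2^* w_2\|_{H_1}$, the resolvent bound on ${\cal A}_d$ (inherited from \eqref{polyestx}, or from uniform boundedness in case (2)) gives a first estimate for $\|V\|_{{\cal H}_0}$ up to the parasitic $|\lambda|\,\|A_2^* w_2\|_{H_1}$ term. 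To control this term I exploit the energy identity for ${\cal A}$: pairing $(i\lambda - {\cal A})V = G$ with $V$ in ${\cal H}_0$ and taking real parts yields $\|A_2^* w_2\|_{H_1}^2 = \mathrm{Re}\langle G,V\rangle_{{\cal H}_0} \le \|G\|_{{\cal H}_0}\|V\|_{{\cal H}_0}$. Combining this with the lifting estimate and applying Young's inequality, I expect to obtain $\|R(i\lambda,{\cal A})\|\le C|\lambda|^{2(\alpha+1)}$; Assumption H plus Borichev--Tomilov then delivers \eqref{polyestxx}. The exponential case of (2) corresponds to taking $\alpha = 0$, recovering the $t^{-1/2}$ rate of \eqref{polyest}.

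The main obstacle will be achieving the sharp one-order loss in $|\lambda|$: a naive Young's inequality applied to the coupled estimates produces only $\|R(i\lambda,{\cal A})\|=O(|\lambda|^{4\alpha+2})$, corresponding to a decay rate slower than advertised. Closing the gap requires either a more refined lifting that exploits the fact that the extra source appears in the $w_3$-slot as $i\tau\lambda\, A_2^* w_2$ rather than as $i\tau\lambda\, A w_2$, or a bootstrap argument reconciling the Cattaneo dissipation $\|w_3\|^2 \le \|F\|\|W\|$ with the Fourier one; this matching of the two energy identities, made possible because our lift enforces $w_3 = A_2^* w_2$ so that $\|w_3\|^2 = \|A^{1/2}w_2\|^2$, is the technical heart of the proof. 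Secondary tasks are verifying the well-posedness of ${\cal A}_d$ (dissipativity plus a range condition in ${\cal H}$ to apply Lumer--Phillips) and Assumption H, both of which follow by standard perturbation arguments from the spectral properties of $A_1$, $A=A_2 A_2^*$, and $C$.
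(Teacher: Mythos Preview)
Your proposal and the paper's proof agree on the core lifting idea for parts (2) and (4)---setting $w_3 = A_2^* w_2$ to pass from the Fourier resolvent equation to the Cattaneo one---but diverge significantly in how that idea is executed, and they take entirely different routes for parts (1) and (3).

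For parts (1) and (3), the paper does \emph{not} go through resolvent characterizations (Huang--Pr\"uss or Borichev--Tomilov) at all. Instead it proves a purely time-domain lemma (Lemma~\ref{echivalenta}) asserting that the damped output and the conservative output are equivalent,
\[
C_1\int_0^T\|\phi_3\|_{H_1}^2\,dt \;\le\; \int_0^T\|w_3\|_{H_1}^2\,dt \;\le\; 4\int_0^T\|\phi_3\|_{H_1}^2\,dt,
\]
by decomposing the damped trajectory as conservative + remainder and estimating the remainder with a Laplace-transform/Parseval argument. Assertion (1) then follows immediately from the energy identity \eqref{ESTEN}, and assertion (3) from a discrete nonlinear recursion ${\cal H}_{k+1}\le{\cal H}_k - C{\cal H}_{k+1}^{1+\alpha}$ together with the elementary Lemma~\ref{genrussell}. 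Your frequency-domain plan is a legitimate alternative route, but it is not what the paper does, and you would still need to supply the passage from the weak observability \eqref{CONDUNbx} to a polynomial resolvent bound, which is not quite ``standard'' in the form you invoke.

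For parts (2) and (4), the paper's argument is by \emph{contradiction} rather than by direct quantitative estimation, and this is exactly what dissolves the obstacle you identify. Assuming $\beta_n^{-2}\|(i\beta_n-{\cal A})^{-1}\|\to\infty$, one takes normalized $Z_n=(u_n,\varphi_n,\theta_n)$ with $\beta_n^{2}(i\beta_n-{\cal A})Z_n\to 0$. The dissipation identity gives $\|A_2^*\theta_n\|^2\le\|(i\beta_n-{\cal A})Z_n\|=o(\beta_n^{-2})$, hence $\beta_n A_2^*\theta_n\to 0$ directly---the square root in the energy inequality converts two powers of $\beta_n$ into one, with no Young's inequality and no bootstrap. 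The lifted vector $\tilde Z_n=(Z_n,A_2^*\theta_n)$ then satisfies $(i\beta_n-{\cal A}_d)\tilde Z_n\to 0$ (the fourth row is exactly $i\beta_n A_2^*\theta_n$), contradicting the uniform resolvent bound on ${\cal A}_d$. Note also that your worry about losing sharpness is moot for part (2): when $\alpha=0$ one has $4\alpha+2=2=2(\alpha+1)$, so even your direct Young's-inequality estimate already lands on the correct $O(|\lambda|^2)$ bound. The paper handles part (4) by declaring ``by the same way'', replacing the exponents $0$ and $2$ by $2\alpha$ and $2\alpha+2$.
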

As a direct consequence we have the following corollary.
\begin{corollary}
\begin{enumerate}
\item
If the system  \eqref{damped1}-\eqref{output} satisfies \rfb{CONDUNb} for all initial data in ${\cal D}({\cal A}_d)$ then the system \eqref{damped11}-\eqref{output11} satisfies \rfb{polyest} for all initial data in ${\cal D({\cal A})}$. 
\item
If the system  \eqref{damped1}-\eqref{output} satisfies \rfb{CONDUNbx} for all initial data in ${\cal D}({\cal A}_d)$ then the system  \eqref{damped1}-\eqref{output} satisfies \rfb{polyestxx} for all initial data in ${\cal D}({\cal A})$. 
\end{enumerate}
\end{corollary}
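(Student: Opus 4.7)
The plan is to deduce each statement of the corollary by composing two parts of Theorem~\ref{obscoup}; the corollary is essentially the concatenation of implications that have already been established, so almost nothing new is needed beyond lining up hypotheses and conclusions.

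For statement (1), I would first observe that the observability inequality \rfb{CONDUNb}, assumed only for initial data in ${\cal D}({\cal A}_d)$, automatically extends to all of ${\cal H}$ by density. Indeed, for the fixed time $T$, the map $(\phi_1^0,\phi_1^1,\phi_2^0,\phi_3^0)\mapsto \int_0^T\|\phi_3(t)\|_{H_1}^2\,\dd t$ is continuous on ${\cal H}$ because the adjoint semigroup generated by ${\cal A}_c$ is strongly continuous, the right-hand side of \rfb{CONDUNb} is manifestly continuous in the initial data, and ${\cal D}({\cal A}_d)$ is dense in ${\cal H}$. Hence the two-sided bound \rfb{CONDUNb} passes to all of ${\cal H}$. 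Part (1) of Theorem~\ref{obscoup} then gives exponential stability of \rfb{damped1}--\rfb{output} in ${\cal H}$, and part (2) of the same theorem immediately yields \rfb{polyest} for every $(w_1^0,w_1^1,w_2^0)\in {\cal D}({\cal A})$.

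For statement (2) the chain is strictly analogous and even more direct. The observability inequality \rfb{CONDUNbx}, extended by the same density argument to all of the enlarged space ${\cal H}_{-\alpha}$, triggers part (3) of Theorem~\ref{obscoup} and produces the polynomial decay \rfb{polyestx} of the Cattaneo energy $E$ for every $(w_1^0,w_1^1,w_2^0,w_3^0)\in {\cal D}({\cal A}_d)$. Plugging this estimate as the hypothesis of part (4) of Theorem~\ref{obscoup} then delivers the polynomial decay \rfb{polyestxx} of the Fourier energy $E_0$ for solutions of \rfb{damped11}--\rfb{output11} starting in ${\cal D}({\cal A})$, which is exactly the conclusion.

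The main (and essentially only) technical point is the density extension of the observability inequalities from the domain of the generator to the full energy space, respectively ${\cal H}$ and ${\cal H}_{-\alpha}$. This is routine, but worth recording, because Theorem~\ref{obscoup} as stated requires the inequalities to hold on the whole space, not merely on a dense subspace; once this step is in place the rest of the corollary is a tautology.
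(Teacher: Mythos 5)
Your proof is correct and follows exactly the route the paper intends: the paper presents this corollary as a ``direct consequence'' of Theorem~\ref{obscoup} with no separate argument, and your chaining of parts (1)$\Rightarrow$(2) and (3)$\Rightarrow$(4) of that theorem is precisely that. Your added density step (justified by the continuity of the observation map, which follows from the admissibility estimate \rfb{eq29} of Lemma~\ref{reg1}) is a legitimate and worthwhile clarification that the paper leaves implicit, but it does not change the approach.
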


The paper is organized as follows.  In Section 2, we show the
well-posedness of the evolution  system
\rfb{damped1}-\rfb{output}, by showing that the operator $({\cal
A}_d,{\cal D}({\cal A}_d))$ generates a contraction $C_0$-semigroup
in the space ${\cal H}$. In the third section we
give some results in the regularity for some infinite dimensional systems needed of the proof of the main result. Section \ref{proof} contains the proof of the main results. Some applications are given in Section \ref{appl}.

\section{Well-posedness}
\setcounter{equation}{0}
Let ${\cal H}:= H_{1,\half} \times H_1 \times H_2 \times H_1^\tau$ the Hilbert space endowed with the inner product
$$
\left<\left(
\begin{array}{ccc}u_1\\u_2\\u_3 \\ u_4\end{array}\right),\left(
\begin{array}{ccc}v_1\\v_2\\v_3 \\ v_4\end{array}\right)\right>_{{\cal H}}=
\left<A_1^\half u_1,A_1^\half v_1\right>_{H_{1}}+\left<u_2,v_2\right>_{H_1}+
 \left<u_3,v_3\right>_{H_2} + \tau \, \left<u_4,v_4\right>_{H_1}.
 $$
We have the following fundamental result.
\begin{theorem}\label{generation}  The operator
${\cal A}_d$, respectively ${\cal A}$, generates a strongly continuous contraction semigroup
$({\cal T}(t))_{t\geq0}$ on ${\cal H}$, respectively on ${\cal H}_0$.
\end{theorem}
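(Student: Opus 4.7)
The plan is to apply the Lumer--Phillips theorem to both operators. For $\mathcal{A}_d$ on $\mathcal{H}$ I would first verify dissipativity by a direct computation: for $U = (u_1,u_2,u_3,u_4) \in \mathcal{D}(\mathcal{A}_d)$, expanding $\langle \mathcal{A}_d U, U\rangle_{\mathcal{H}}$ block by block, the $(u_1,u_2)$ cross terms cancel by self-adjointness of $A_1$, the $(u_2,u_3)$ cross terms cancel by duality between $C$ and $C^*$, and the $(u_3,u_4)$ cross terms cancel thanks to the factorization $A = A_2 A_2^*$ together with the $\tau$-weighting built into $H_1^\tau$. Only the $-\tfrac{1}{\tau}I$ entry contributes nontrivially, giving $\mathrm{Re}\,\langle \mathcal{A}_d U, U\rangle_{\mathcal{H}} = -\|u_4\|_{H_1}^2 \leq 0$, in agreement with the derivative form of \eqref{ESTEN}. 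The same calculation for $\mathcal{A}$ on $\mathcal{H}_0$ yields $\mathrm{Re}\,\langle \mathcal{A} V, V\rangle_{\mathcal{H}_0} = -\|A_2^* v_3\|_{H_2}^2 \leq 0$, consistent with \eqref{ESTENb}.

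For the range condition I would show that $I - \mathcal{A}_d$ maps $\mathcal{D}(\mathcal{A}_d)$ onto $\mathcal{H}$. Given $F = (f_1,f_2,f_3,f_4) \in \mathcal{H}$, the first and fourth equations of $(I - \mathcal{A}_d) U = F$ are easily eliminated: $u_2 = u_1 - f_1$ and $u_4 = \frac{\tau}{\tau+1}\bigl(f_4 + \frac{1}{\tau} A_2^* u_3\bigr)$. Substituting reduces the problem to the coupled elliptic system on $V := H_{1,\half} \times H_{2,\half}$
\[
(I+A_1)u_1 + Cu_3 = f_1+f_2,\qquad -C^* u_1 + \bigl(I + \tfrac{1}{\tau+1}A\bigr)u_3 = f_3 - C^* f_1 - \tfrac{\tau}{\tau+1} A_2 f_4,
\]
which I would solve by Lax--Milgram. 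Pairing the first equation with $v_1$ and the second with $v_3$, the off-diagonal cross-terms $\langle Cu_3, v_1\rangle$ and $\langle C^* u_1, v_3\rangle$ are complex conjugates of one another on the diagonal, so they contribute zero to $\mathrm{Re}\,a(U,U)$, leaving $\mathrm{Re}\,a(U,U) = \|u_1\|_{H_{1,\half}}^2 + \|u_3\|_{H_{2,\half}}^2$ (using $\|A^{\half}u_3\|_{H_2}^2 = \|A_2^* u_3\|_{H_1}^2$ from $A = A_2 A_2^*$). Coercivity on $V$ is then immediate. Lax--Milgram yields $(u_1,u_3) \in V$; the identity $(I+A_1)u_1 = f_1+f_2 - Cu_3 \in H_1$ bootstraps $u_1$ into $H_{1,1}$, and $u_4$ is recovered from its explicit formula. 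The analogous (and simpler) two-block argument handles $I - \mathcal{A}$ on $\mathcal{H}_0$.

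The main obstacle is ensuring that the reconstructed $u_4$ sits in the correct component of $\mathcal{D}(\mathcal{A}_d)$: the formula for $u_4$ gives only $u_4 \in H_1$ directly from $A_2^* u_3 \in H_1$, whereas $\mathcal{D}(\mathcal{A}_d)$ demands $u_4 \in H_{1,\half}$. Closing this half-regularity gap must be done via the third resolvent equation, which yields $A_2 u_4 = f_3 + C^* u_2 - u_3 \in H_2$, combined with the structural consequences of $A = A_2 A_2^*$; this is where one exploits the hypothesis $A_2 \in \mathcal{L}(H_1, H_{2,-\half})$ acting on each level of the scale. Once the maximality is established, the Lumer--Phillips theorem delivers the desired contraction $C_0$-semigroup $({\cal T}(t))_{t\geq 0}$ on $\mathcal{H}$, and the parallel argument produces the analogous contraction semigroup on $\mathcal{H}_0$ generated by $\mathcal{A}$.
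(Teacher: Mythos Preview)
Your dissipativity computation matches the paper's exactly, but from that point on the two arguments diverge. You go for the range condition $\mathrm{Ran}(I-\mathcal{A}_d)=\mathcal{H}$ via a Lax--Milgram reduction to a $2\times 2$ elliptic system; the paper instead shows that $\mathcal{A}_d$ is closed, computes the adjoint explicitly as
\[
\mathcal{A}_d^* = \begin{pmatrix}
0 & I & 0 & 0\\
-A_1 & 0 & -C^* & 0\\
0 & C & 0 & -A_2^*\\
0 & 0 & \tfrac{1}{\tau}A_2 & -\tfrac{1}{\tau}I
\end{pmatrix}
\quad\text{on}\quad \mathcal{D}(\mathcal{A}_d^*)=\mathcal{D}(\mathcal{A}_d),
\]
observes that $\mathcal{A}_d^*$ is dissipative by the same calculation, and then invokes the corollary of Lumer--Phillips that a closed, densely defined operator generates a contraction semigroup whenever both it and its adjoint are dissipative. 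The paper's route bypasses the explicit elliptic solve entirely and hence never has to reconstruct $u_4$ or face the half-regularity gap you flag; on the other hand, your approach is more constructive and gives a concrete resolvent formula. It is worth noting that the regularity issue you identify for $u_4$ (needing $u_4\in H_{1,\half}$ rather than merely $H_1$) has an exact counterpart in the paper's closedness argument, where the conclusion $W^4\in H_{1,\half}$ is drawn from $A_2 W^4 = C^*W^2 - Z^3\in H_2$; in both proofs this step ultimately rests on the assumed mapping properties of $A_2$ between the scales $H_{1,\alpha}$ and $H_{2,\alpha-\half}$.
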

\proof Take $\left(
\begin{array}
{ccc}u_1\\u_2\\v\\w
\end{array}
\right)\in {\cal D}({\cal A}_d)$. We have
\begin{align*}
\left<{\cal A}_d\left(
\begin{array}{ccc} u_1\\u_2\\v \\ w\end{array}\right),\left(
\begin{array}{ccc}u_1\\u_2\\v \\w \end{array}\right)\right>_{{\cal H}}&=\left<\left(
\begin{array}{l}
 u_2 \\
- A_1 u_1 -  C v  \\
  C^*  u_2-A_2w \\
\frac{1}{\tau}  \, A^*_2 v - \frac{1}{\tau} \, w
\end{array}
\right),\left(
\begin{array}{ccc}u_1\\u_2\\v\\w\end{array}\right)\right>_{{\cal H}}\\
&= - \, \|w\|^2_{H_1}.
\end{align*}
Thus ${\cal A}_d$ is dissipative.
The density of ${\cal D}({\cal A}_d)$ is obvious.

Next, we are going to show that ${\cal A}_d$ is closed and 
\be
\label{adj}
{\cal D}({\cal A}^*_d) = {\cal D}({\cal A}_d), \, 
{\cal A}^*_d = \left(
\begin{array}{cccc}
0 &  I &  0  & 0\\
- A_1 & 0 & - C^* &  0\\
0 & C & 0 & - A_2^*\\
0 & 0 & \frac{1}{\tau} \, A_2 & - \frac{1}{\tau} I
\end{array}
\right).
\ee
Let $(W_n) \subset {\cal D}({\cal A}_d), \, W_n \rightarrow W \in {\cal H}, \, {\cal A}_d W_n \rightarrow Z \in 
{\cal H}$ as $n \rightarrow \infty.$
Then
$$
\left\langle {\cal A}_d W_n, \Phi \right\rangle_{{\cal H}} \rightarrow \left\langle Z,\Phi\right\rangle_{{\cal H}}.
$$
Choosing successively 
$
\Phi = (\Phi^1,0,0,0), \, \Phi^1 \in H_{1,1},\, \Phi = (0,0,\Phi^3,0), \, \Phi^3 \in H_{2,\half}, \, \Phi = (0,0,0,\Phi^4), \, \Phi^4 \in H_{1,\half},$ and $
\Phi = (0,\Phi^2,0,0), \, \Phi^2 \in  H_{1,\half}$, 
we obtain
$$
W^2 \in H_{1,\half},  W^2 = Z^1, \, W^4 \in H_{1,\half}, \, C^* W^2 - A_2 W^4 = Z^3, \, 
$$
$$
W^3 \in H_{2,\half}, \, A^*_2 W^3 - W^4 = \tau \, Z^3; \, W^1 \in H_{1,1}, \, 
$$
$$
- A_1 W^1 - C W^3 = Z^2,
$$
which yields that  $W \in {\cal D}({\cal A}_d)$ and ${\cal A}_d W = Z$.

$$ V \in {\cal D}({\cal A}^*_d) \Leftrightarrow \exists Z \in {\cal H} \, \forall \, \Phi \in {\cal D}({\cal A}_d); \,
\left\langle {\cal A}_d \Phi,Z\right\rangle_{\cal H} = \left\langle \Phi,Z\right\rangle_{\cal H}.
$$
Choosing $\Phi$ approprialtely as in above, the conclusion \rfb{adj} follows.
Finally, the Hille-Yosida theorem leads to the claim.

By the same way we can prove that ${\cal A}$ generates a $C_0$- semigroup of contractions on ${\cal H}_0$.\endproof

\section{Regularity of some coupled systems} \label{trans}
\setcounter{equation}{0}
We consider the initial and boundary value problems
\be  \ddot\phi_1(t) + A_1 \phi_1(t) + C \phi_2(t)= 0, \,
\dot\phi_2(t)  + A_2 \phi_3 - C^*\dot\phi_1(t)= 0, \, \tau \, \dot \phi_3(t) - A^*_2 \phi_2(t) =0 \label{eq3}\ee
\be \phi_1(0) = w_1^0, \, \dot\phi_1(0) = w_1^1, \, \phi_2(0) = w_2^0, \, \phi_3(0) = w_3^0,
\label{eq5}
\ee
and
\be
\label{eq11} \ddot{\phi}(t) + A_1 \phi(t) + C \psi(t) = 0, \,
\dot{\psi} + A_2 w(t) - C^* \dot \phi(t) = 0, \, \tau \, \dot w(t) - A^*_2 \psi(t) = g(t) 
\ee
\be
\label{eq18bis} \phi(0) = 0, \,\dot{\phi}(0)= 0,  \,
\psi(0) = 0, \, w(0) = 0. \ee
We have the following proposition.
\begin{proposition} \label{reg}
Let $g \in L^2(0,T;H_2)$. Then the system (\ref{eq11})-(\ref{eq18bis}) admits a unique
solution
\be
\label{eq19}
\left(\phi,\dot{\phi},\psi,w \right) \in C(0,T;H_{1,\half} \times H_1 \times H_2 \times H_1). 
\ee 
Moreover
$\ds w  \in L^2(0,T;H_1)$ and there
exists a constant $C >0$ such that
\be
\label{eq20}
 \left| \left| w \right| \right|_{L^2(0,T;H_2)} \leq C \, \left| \left|g \right|
\right|_{L^2(0,T;H_1)}, \, \forall \, g \in L^2(0,T;H_1).
\ee
\end{proposition}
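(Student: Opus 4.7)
The plan has two stages: well-posedness in the energy space, then the sharper bound on $w$.

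For the well-posedness claim (\ref{eq19}) I would rewrite (\ref{eq11})-(\ref{eq18bis}) as the abstract inhomogeneous Cauchy problem $\dot U(t) = {\cal A}_c U(t) + F(t)$, $U(0)=0$, with state $U = (\phi,\dot\phi,\psi,w)^T\in{\cal H}$ and forcing $F = (0,0,0,\tau^{-1}g)^T$. The operator ${\cal A}_c$ from (\ref{opnx}) differs from the generator ${\cal A}_d$ of Theorem~\ref{generation} only by the bounded zero-order term $\tau^{-1}I$ in the $(4,4)$-block, so it too generates a $C_0$-semigroup on ${\cal H}$ by bounded perturbation. With $F\in L^2(0,T;{\cal H})$, Duhamel's formula produces a unique mild solution $U\in C(0,T;{\cal H})$, which is exactly (\ref{eq19}).

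For the baseline bound, differentiating ${\cal E}(t) = \tfrac12\|U(t)\|_{\cal H}^2$ along the trajectory (the coupling terms cancel by the same skew-symmetry used to prove dissipativity in Theorem~\ref{generation}) gives $\tfrac{d}{dt}{\cal E}(t) = -\|w(t)\|^2_{H_1} + \langle g(t),w(t)\rangle_{H_1}$, hence
\[
{\cal E}(T) + \int_0^T \|w(s)\|^2_{H_1}\,ds = \int_0^T \langle g(s),w(s)\rangle_{H_1}\,ds.
\]
Young's inequality then yields the preliminary $\|w\|_{L^2(0,T;H_1)} \le C\|g\|_{L^2(0,T;H_1)}$ and, simultaneously, $\sup_{[0,T]}{\cal E}(t) \le C\|g\|_{L^2(0,T;H_1)}^2$. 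This is only the energy-level estimate and is strictly weaker than (\ref{eq20}).

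To upgrade to the sharper estimate (\ref{eq20}), the plan is to use the middle equation $\dot\psi + A_2 w = C^*\dot\phi$ as an extra identity that trades time-smoothness of $\psi$ for space-smoothness of $w$. Applying $A_2^*$ and invoking the factorization $A = A_2 A_2^*$ gives the pointwise relation $A_2^*\dot\psi + Aw = A_2^*C^*\dot\phi$. Testing this against $w$ in $H_1$ over $(0,T)$, integrating by parts in time with $\psi(0)=0$, and bounding each term by the baseline estimate and $\|g\|_{L^2(0,T;H_1)}$, should extract the missing half-derivative on $w$ and land in the stronger target norm. Equivalently, one may proceed by transposition: pair the solution against the backward conservative adjoint (\ref{eq3b})-(\ref{eq4b}) with terminal datum localized in the $\phi_3$-slot, rewrite the dual pairing as $\int_0^T\langle g,\phi_3\rangle_{H_1}\,dt$, and invoke admissibility of that observation on the conservative semigroup generated by ${\cal A}_c$.

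The main obstacle is precisely this gain of regularity beyond the energy space: a pure energy identity only sees $w$ in $H_1$, so the sharper bound must come from the hyperbolic-parabolic coupling -- concretely, from the smoothing that the wave pair $(\phi,\dot\phi)$ imposes on $A_2 w$ through the $\psi$-equation. Managing the time-boundary terms produced by the integration by parts (where $\psi(T)$ is controlled only in $H_2$, not in $D(A_2^*)$) and keeping the dualities consistent with the $H_{i,\alpha}$-scale from (\ref{A0ext}) -- so that the $A_2/A_2^*$ compositions land in the correct spaces rather than in $H_{1,-\half}$ -- is where I expect the delicate work to lie.
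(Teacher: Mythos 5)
Your first two paragraphs are essentially the paper's own proof, and they already finish the job. The paper rewrites \eqref{eq11}--\eqref{eq18bis} as $U'(t)={\cal A}_c U(t)-{\cal B}g(t)$, $U(0)=0$, where the skew-adjoint ${\cal A}_c$ generates a group of isometries $S(t)$ and ${\cal B}k=(0,0,0,\tfrac{1}{\sqrt{\tau}}k)^T$ is a \emph{bounded} control operator, and it reads off both \eqref{eq19} and \eqref{eq20} from Duhamel's formula (the dual admissibility statement being Lemma \ref{reg1}). What you have missed is that \eqref{eq20} is only the energy-level bound: the ``$L^2(0,T;H_2)$'' on its left-hand side is a typo for $L^2(0,T;H_1)$ --- the proposition itself asserts $w\in L^2(0,T;H_1)$, the component $w$ lives in the fourth slot $H_1^\tau$ of ${\cal H}$, and every subsequent use of the proposition (see \eqref{a} and \eqref{ineqpsi}) invokes exactly $\|w\|_{L^2(0,T;H_1)}\le C\|g\|_{L^2(0,T;H_1)}$. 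Since $S(t)$ is isometric and ${\cal B}$ is bounded, Duhamel gives $\sup_{[0,T]}\|U(t)\|_{\cal H}\le C\,\|g\|_{L^2(0,T;H_1)}$, whence $\|w\|_{L^2(0,T;H_1)}\le \sqrt{T/\tau}\,\sup_{[0,T]}\|U(t)\|_{\cal H}\le C_T\|g\|_{L^2(0,T;H_1)}$; there is no regularity to gain. (One small correction to your second paragraph: the system \eqref{eq11} as written carries no damping term $+w$, so the identity is $\frac{d}{dt}{\cal E}(t)=\langle g(t),w(t)\rangle_{H_1}$ with no $-\|w\|^2_{H_1}$ term; the estimate then follows as above with a $T$-dependent constant rather than from Young's inequality alone.)

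Your third paragraph, besides being unnecessary, would not go through as written. Applying $A_2^*$ to the $\psi$-equation produces $A_2^*A_2w$, which is \emph{not} $Aw$: the paper factorizes $A=A_2A_2^*$ on the $H_2$-scale, whereas $w$ lives on the $H_1$-scale, so $A_2^*A_2$ is a different operator; moreover $A_2^*\dot\psi$ only lands in $H_{1,-1}$, so the proposed pairing with $w\in H_1$ is undefined without precisely the extra smoothness you are trying to establish. That paragraph is in any case a plan with acknowledged open obstacles, not a proof. Delete it, read \eqref{eq20} with $H_1$ on both sides, and your first two paragraphs coincide with the paper's argument.
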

For proving Proposition \ref{reg}, we should study the
conservative system  (without dissipation)
associated to problem (\ref{damped1})-(\ref{output}).
We have the following result.
\begin{lemma} \label{reg1}
For all $\left(w_1^0,w_1^1,w_2^0,w_3^0\right) \in H_{1,\half} \times H_1 \times H_2 \times H_1$ the system
\noindent (\ref{eq3})-(\ref{eq5}) admits a
unique solution $\left(\phi_1, \dot{\phi}_1,\phi_2, \phi_3 \right) \in C(0,T;H_{1,\half} \times H_1 \times H_2 \times H_1)$. Then $\ds \phi_3  \in L^2(0,T;H_1)$ and there exists a
constant  $C>0$ such that
\be
\left| \left|\ds \phi_3
\right| \right|_{L^2(0,T;H_1)} \leq C \, \left| \left|
(w_1^0,w_1^1,w_2^0,w_3^0) \right| \right|_{H_{1,\half} \times H_1\times H_2 \times H_1},
\label{eq29}
\ee
$$
\forall \,
(w_1^0,w_1^1,w_2^0,w_3^0) \in H_{1,\half} \times H_1 \times H_2 \times H_1.
$$
\end{lemma}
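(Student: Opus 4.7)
The plan is to reduce Lemma \ref{reg1} to two independent ingredients: (i) well-posedness of the conservative problem, obtained by showing that the operator $\mathcal{A}_c$ of \rfb{opnx} generates a strongly continuous group of isometries on $\mathcal{H}$, and (ii) a trivial consequence of energy conservation, namely that $\tau\|\phi_3(t)\|^2_{H_1}$ is merely one of the summands composing the conserved norm $\|(\phi_1,\dot\phi_1,\phi_2,\phi_3)(t)\|^2_{\mathcal{H}}$, so integrating in time yields the desired $L^2(0,T;H_1)$ bound at no cost.

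For the generation step I would use either of two equivalent routes. The shortest is to write $\mathcal{A}_c = \mathcal{A}_d + B$ where $B\in\mathcal{L}(\mathcal{H})$ is the bounded operator acting as $\frac{1}{\tau}I$ on the fourth component and as $0$ on the others; since Theorem \ref{generation} has already established that $\mathcal{A}_d$ generates a contraction $C_0$-semigroup on $\mathcal{H}$, the bounded perturbation theorem immediately gives that $\mathcal{A}_c$ generates a $C_0$-semigroup on $\mathcal{H}$. A more direct alternative is to repeat the calculation of the proof of Theorem \ref{generation} with the damping term switched off: the same cross-term cancellations (the pairs $\pm\langle Cv,u_2\rangle$ and, after the factor $\tau$, $\pm\langle A_2^* v, w\rangle$) show that $\mathrm{Re}\,\langle \mathcal{A}_c W,W\rangle_{\mathcal{H}}=0$ for all $W\in\mathcal{D}(\mathcal{A}_c)$, so that both $\mathcal{A}_c$ and $-\mathcal{A}_c$ are dissipative; combined with the range condition for $\lambda I-\mathcal{A}_c$, which reduces (after eliminating the first and fourth components) to the same elliptic problem already solved for $\mathcal{A}_d$, Stone's theorem produces a $C_0$-group of isometries on $\mathcal{H}$.

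In either case, for any $(w_1^0,w_1^1,w_2^0,w_3^0)\in\mathcal{H}=H_{1,\half}\times H_1\times H_2\times H_1^\tau$ the solution $(\phi_1,\dot\phi_1,\phi_2,\phi_3)\in C(0,T;\mathcal{H})$ satisfies the conservation identity
$$\|(\phi_1,\dot\phi_1,\phi_2,\phi_3)(t)\|^2_{\mathcal{H}}=\|(w_1^0,w_1^1,w_2^0,w_3^0)\|^2_{\mathcal{H}},\qquad t\in[0,T].$$
Since the definition of the $\mathcal{H}$-norm contains the term $\tau\|\phi_3(t)\|^2_{H_1}$, we immediately obtain
$$\int_0^T\|\phi_3(t)\|^2_{H_1}\,dt \leq \frac{T}{\tau}\,\|(w_1^0,w_1^1,w_2^0,w_3^0)\|^2_{\mathcal{H}},$$
which is exactly \rfb{eq29} once one notes that the norm on $\mathcal{H}$ is equivalent to the $H_{1,\half}\times H_1\times H_2\times H_1$ norm (with a constant depending on $\tau$).

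I do not anticipate a genuine obstacle inside this lemma itself: both the generation step and the extraction of the estimate are essentially one-line consequences of what has already been done for $\mathcal{A}_d$. The mildly delicate item is the verification of the range condition needed for Stone's theorem, but this is sidestepped entirely by the bounded perturbation argument. The real work is deferred to Proposition \ref{reg}, where the forcing $g$ in the third equation must be treated and Lemma \ref{reg1} will play the role of the baseline hidden-regularity result for the corresponding adjoint problem via a standard transposition/duality argument.
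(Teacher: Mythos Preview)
Your proposal is correct and matches the paper's approach: the paper's proof is a two-line appeal to ``classical semigroup theory'' for well-posedness and then asserts \rfb{eq29}, while in the subsequent proof of Proposition \ref{reg} it makes explicit exactly your point that $\mathcal{A}_c$ is skew-adjoint and generates a group of isometries on $\mathcal{H}$. Your write-up simply fills in the details the paper leaves implicit; the only minor remark is that the bounded-perturbation route alone yields a $C_0$-semigroup but not an isometry group, so for the conservation identity you must use the direct skew-adjointness computation (though either route already suffices for the $L^2(0,T;H_1)$ bound, since any $C_0$-semigroup is bounded on $[0,T]$).
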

\begin{proof}
By the classical semigroup theory, see \cite{pazy}, we prove that for all $\left(w_1^0,w_1^1,w_2^0,w_3^0\right) \in H_{1,\half} \times H_1 \times H_2 \times H_1$ the system (\ref{eq3})-(\ref{eq5}) admits a
unique solution $\left(\phi_1, \dot{\phi}_1, \phi_2, \phi_3 \right) \in C(0,T;H_{1,\half} \times H_1\times H_2 \times H_1)$. We obtain that $\ds \phi_3 \in L^2(0,T;H_1)$ and that \eqref{eq29} holds.
\end{proof}

Now we can give the proof of  Proposition \ref{reg}.
\begin{proof} {\it of Proposition \ref{reg}.}

Let  the operator
\[
{\cal A}_c : {\cal D}({\cal A}_c) = H_{1,1} \times H_{1,\half} \times H_{2,\frac{1}{2}} \times H_{2,\frac{1}{2}} \subset {\cal H} \rightarrow {\cal H},
\]
defined by
\[
{\cal A}_c
\begin{pmatrix}
u_1 \cr
u_2 \cr
u_3 \cr u_4
\end{pmatrix}
 = \begin{pmatrix}
u_2 \cr - A_1u_1 - C u_3
\cr C^* u_2 \cr 
\frac{1}{\tau} \, A_2^*u_2 
\end{pmatrix}
, \, \forall \, (u_1,u_2,u_3,u_4) \in {\cal D}({\cal A}).
\]
${\cal A}_c$ is a skew-adjoint operator and
generates a group of isometries $(S(t))_{t\in \mathbb{R}}$ on
${\cal H}$.
Moreover we define the operator
\be
\label{DEFB0}
{\cal B}:H_{2} \rightarrow {\cal H},\
{\cal B} k= \begin{pmatrix}
0 \cr 0 \cr 0 \cr \frac{1}{\sqrt{\tau}} \, k
\end{pmatrix},\ \forall \, k \in H_{1}.
\ee
The problem \rfb{eq11}-\rfb{eq18bis} can be rewritten as a
Cauchy problem on ${\cal H}$
under the form
\be
\label{ECAV}
\begin{pmatrix}
\phi \cr  \dot{\phi} \cr \psi \cr w \end{pmatrix}^\prime (t)
={\cal A}_c \begin{pmatrix} \phi \cr \dot{\phi} \cr \psi \cr w \end{pmatrix}(t) - {\cal B} g(t), \, t > 0,
\ee
\be
\label{CIAV}
\phi(0) = 0, \,  \dot{\phi}(0) = 0, \, \psi(0)=0, \, w(0) = 0.
\ee
We can see that the operator
${\cal B}^*:{\cal H} \rightarrow H_1$ is given by
$$
{\cal B}^* \begin{pmatrix} u_1 \cr u_2 \cr v_1 \cr v_2 \end{pmatrix}= \frac{1}{\sqrt{\tau}} v_2,\FORALL
(u_1,u_2,v_1,v_2) \in {\cal H},
$$
which implies that
\be
\label{ACTADJ}
{\cal B}^*S^*(t) \begin{pmatrix} w_1^0\cr w_1^1 \cr w_2^0 \cr w_3^0  \end{pmatrix}={\cal B}^*\begin{pmatrix}\phi_1(t)\cr\dot \phi_1(t)\cr\phi_2(t) \cr \phi_3(t)  \end{pmatrix}=
\frac{1}{\sqrt{\tau}} \phi_3(t),
\FORALL (w_1^0,w_1^1,w_2^0,w_3^0)\in {\cal D}({\cal A}_c),
\ee
with $(\phi_1,\phi_2,\phi_3)$ is the solution of  \rfb{eq3}-\rfb{eq5}.
According to semigroup theory, see \cite{pazy}, we have that \rfb{eq11}-\rfb{eq18bis}
admits a unique solution
$$\left(\phi,\dot{\phi},\psi,w \right)(t) = \int_0^t S(t-s){\cal B}g(s) \, ds
\in C(0,T;{\cal H})
$$
which satisfies the regularity \rfb{eq20}.
  
\end{proof}

\section{Proof of the main result} \label{proof}
\setcounter{equation}{0}
Let $(w_1,\dot w_1,w_2,w_3) \in C(0,T; H_{1,\half} \times H_1 \times H_2 \times H_1)$
be the solution of \rfb{damped1}-\rfb{output} for a given initial data $(w_1^0,w_1^1,w_2^0,w_3^0)$.
Then $(w_1,\dot w_1,w_2,w_3)$ can be written as
\be
 (w_1,\dot w_1,w_2,w_3) =(\phi_1,\dot \phi_1,\phi_2,\phi_3)+(\phi,\dot \phi,\psi,w),
\label{SUMA}\ee
where $(\phi_1,\phi_2,\phi_3)$ satisfies \rfb{eq3}-\rfb{eq5}
and $(\phi,\psi,w)$ satisfies \rfb{eq11}-\rfb{eq18bis} with $g=  - w_3$.

The main ingredient of the proof of Theorem \ref{obscoup}
is the following result.
\begin{lemma}\label{echivalenta}
 Let
$(w_1^0, w_1^1,w_2^0,w_3^0) \in  H_{1,\half} \times H_1 \times H_2 \times H_1$.
Then the solution $(w_1,\dot w_1,w_2,w_3)$ of \rfb{damped1}-\rfb{output} and
the solution $(\phi_1,\phi_2,\phi_3)$ of
 \rfb{eq3}-\rfb{eq5}
satisfy
\be
C_1\int_0^T || \phi_3(t)||_{H_1}^2dt\le
\int_0^T ||w_3(t)||_{H_1}^2dt\le
4\int_0^T || \phi_3(t)||_{H_1}^2dt,
\label{INEGDUB}\ee
where $C_1>0$ is a  constant independent of $(w_1^0, w_1^1,w_2^0,w_3^0)$.
\end{lemma}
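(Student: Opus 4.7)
The plan is to combine the decomposition \rfb{SUMA} with an energy identity for the inhomogeneous problem \rfb{eq11}-\rfb{eq18bis} and the regularity bound of Proposition \ref{reg}. Since $(\phi,\dot\phi,\psi,w)$ solves \rfb{eq11}-\rfb{eq18bis} with zero initial data and source $g=-w_3$, and since $w_3=\phi_3+w$, both inequalities in \rfb{INEGDUB} reduce to comparing $\|w\|_{L^2(0,T;H_1)}$ to $\|\phi_3\|_{L^2(0,T;H_1)}$ and to $\|w_3\|_{L^2(0,T;H_1)}$.

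For the upper bound I would compute the natural energy $\widetilde E(t)=\frac{1}{2}\|(\phi(t),\dot\phi(t),\psi(t),w(t))\|_{{\cal H}}^2$. Differentiating along \rfb{eq11} with $g=-w_3$, the $A_1$-contribution cancels against $\langle A_1^{1/2}\phi, A_1^{1/2}\dot\phi\rangle$, the $C$ and $C^*$ cross-terms cancel by adjointness, the $A_2$ and $A_2^*$ cross-terms likewise cancel, and only the source term survives, giving
\[
\frac{d}{dt}\widetilde E(t)=-\langle w(t),w_3(t)\rangle_{H_1}.
\]
Because $\widetilde E(0)=0$ and $\widetilde E(T)\geq 0$, integrating yields $\int_0^T\langle w,w_3\rangle_{H_1}\,dt\leq 0$. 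Substituting $w_3=\phi_3+w$ this becomes $\int_0^T\|w\|_{H_1}^2\,dt\leq-\int_0^T\langle w,\phi_3\rangle_{H_1}\,dt$, and Cauchy--Schwarz then gives $\|w\|_{L^2(0,T;H_1)}\leq\|\phi_3\|_{L^2(0,T;H_1)}$. The right-hand inequality in \rfb{INEGDUB} follows at once from $\|w_3\|_{H_1}^2\leq 2\|\phi_3\|_{H_1}^2+2\|w\|_{H_1}^2$ after integrating in time.

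For the lower bound, Proposition \ref{reg} applied to \rfb{eq11}-\rfb{eq18bis} with $g=-w_3$ produces a constant $C_0>0$ such that $\|w\|_{L^2(0,T;H_1)}\leq C_0\|w_3\|_{L^2(0,T;H_1)}$. Writing $\phi_3=w_3-w$ and using the elementary inequality $(a+b)^2\leq 2a^2+2b^2$ yields $\int_0^T\|\phi_3\|_{H_1}^2\,dt\leq 2(1+C_0^2)\int_0^T\|w_3\|_{H_1}^2\,dt$, which is the left-hand inequality in \rfb{INEGDUB} with $C_1=1/(2(1+C_0^2))$.

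The main technical subtlety I anticipate lies in rigorously justifying the energy identity for $\widetilde E$ at the level of regularity afforded by Proposition \ref{reg}: the cancellations above are \emph{a priori} formal, since $w_3$ belongs only to $L^2(0,T;H_1)$. This should be handled by a standard density argument, first establishing the identity for smooth sources $g$ (so that $(\phi,\dot\phi,\psi,w)$ is a classical solution and all pairings are genuine inner products), and then passing to the limit using the continuous dependence of the mild solution on $g\in L^2(0,T;H_1)$ provided by Proposition \ref{reg}.
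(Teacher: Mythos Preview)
Your proof is correct. The lower-bound argument coincides with the paper's: both invoke Proposition~\ref{reg} with $g=-w_3$ and combine it with the triangle inequality applied to $\phi_3=w_3-w$.

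For the upper bound, however, the paper takes a genuinely different route. Instead of differentiating $\widetilde E$, it first rewrites the third equation of \rfb{eq11} as $\tau\dot w - A_2^*\psi + w = -\phi_3$ (substituting $g=-w_3=-\phi_3-w$), extends $(\phi,\psi,w,\phi_3)$ by zero outside $[0,T]$, and applies the Laplace transform. Pairing the transformed equations with $\lambda\overline{\widehat\phi}$, $\overline{\widehat\psi}$, $\overline{\widehat w}$ and taking real parts yields, for $\lambda=\gamma+i\eta$ with $\gamma>0$,
\[
\gamma|\lambda|^2\|\widehat\phi\|_{H_1}^2+\gamma\|A_1^{1/2}\widehat\phi\|_{H_1}^2+\gamma\|\widehat\psi\|_{H_2}^2+(\gamma\tau+1)\|\widehat w\|_{H_1}^2=-\Re\langle\widehat\phi_3,\overline{\widehat w}\rangle_{H_1},
\]
and then Parseval's identity recovers $\|w\|_{L^2(0,T;H_1)}\le\|\phi_3\|_{L^2(0,T;H_1)}$. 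Your time-domain energy identity $\frac{d}{dt}\widetilde E=-\langle w,w_3\rangle_{H_1}$ is precisely the real-time counterpart of that frequency-domain real-part computation, so the two arguments are morally the same but packaged differently. Your version is more elementary (no Laplace machinery, no extension by zero) and makes the density justification you flag entirely routine; the paper's version, by contrast, requires some care to justify the transform manipulations at the available regularity.
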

\begin{proof}
We prove \rfb{INEGDUB} for $(w_1,w_2,w_3)$ satisfying \rfb{damped1}-\rfb{output}
and $(\phi_1,\phi_2,\phi_3)$ solution of  \rfb{eq3}-\rfb{eq5}.
We know that $w_3 \in
L^2(0,T;H_1)$ and that \rfb{ESTEN} holds true.
Relation \rfb{SUMA} implies that
$$
\int_0^T ||\phi_3(t)||_{H_1}^2dt\le
2 \left\{ \int_{0}^{T}||w_3(t)||_{H_1}^2dt+
\int_0^T ||w(t)||_{H_1}^2dt\right\}.
$$
By applying now Proposition \ref{reg} with $g = - w_3
\in L^2(0,T;H_1)$ we obtain that
\be
\label{a}
\int_0^T ||w(t)||_{H_1}^2 \, dt\le C \,
\int_0^T || w_3(t)||_{H_1}^2 \, dt.
\ee
Then
the first inequality of \eqref{INEGDUB} holds true.

On the other hand, according to relation
\rfb{SUMA} we have that
\[
\phi_3 \in L^2(0,T;H_1),
\]
and
\be
\label{eq177bis}
\ddot \phi(t) + A_1 \phi(t) + C \psi(t) = 0, \, \dot \psi(t) + A_2 w(t) - C^* \dot \phi(t) =0, \, \dot{w}(t) - A_2^* \psi(t) + w(t) = - \phi_3(t).
\ee
We still denote by $\phi_3$ the extension by  $0, \, t \in \rline \setminus [0,T]$. We still also denote by $(\phi(t),\psi(t),w(t))$ the functions  $(1_{[0,T]}\phi(t),1_{[0,T]}\psi(t),1_{[0,T]} w(t))$. It is clear that these functions satisfy  the equation on the line $\rline$
\be
\label{psi}
\left\{
\begin{array}{ll}
\ddot \phi(t) + A_1 \phi(t) + C \psi(t) = 0, \,
\dot \psi(t) + A_2 w(t) - C^* \dot \phi(t) = 0, \\
\dot{w}(t) - A_2^* \psi(t) + w(t)= - \phi_3(t), \, t \in
\rline, \, 
\phi(0)= 0, \, \dot \phi (0) = 0, \, \psi(0) = 0, \, w(0) = 0.
\end{array}
\right.
\ee
Taking the Laplace transform we obtain
$$
\lambda^2 \widehat{\phi}(\lambda) + A_1 \widehat{\phi}(\lambda) + C \widehat{\psi}(\lambda) = 0, \,
 \lambda \, \widehat {\psi}(\lambda) + A_2 \widehat {w}(\lambda) -
\lambda \, C^* \widehat {\phi}(\lambda) = 0, 
$$
$$
\lambda \tau \, \widehat{w}(\lambda) - A^*_2 \widehat{\psi}(\lambda) + \widehat{w}(\lambda)= - \widehat{\phi}_3(\lambda), \quad \,
\forall \, \lambda = \gamma + i \eta, \, \gamma > 0.
$$
The equality above holds in $H_{1,-\half}, \, H_{2,-\frac{1}{2}}, H_{2,-\frac{1}{2}}$, respectively.
By applying $\lambda\bar{\widehat{\phi}} \in H_{1,\half}, \,
\bar{\widehat{\psi}} \in H_2, \, \bar{\widehat{w}} \in H_1$ respectively to first, second and to the third
equation on the  equalities above, we get by taking the real part,
$$
\gamma \, |\lambda|^2 \, ||\widehat {\phi}(\lambda)||^2_{H_1} +
\gamma \, ||A_1^\half \widehat {\phi}(\lambda)||_{H_1}^2 +  
 \gamma \,
||\widehat {\psi}(\lambda)||^2_{H_2} + (\gamma \tau + 1) \, \left\|\widehat{w}(\lambda)\right\|_{H_1}^2
 = 
 $$
 $$
 - \, \Re \, \left(
<\widehat{\phi}_3(\lambda), \bar{\widehat{w}}(\lambda)>_{H_1} \right).
$$
We get, 
\[
\int_{\rline_{\eta}} || \widehat {w}(\lambda)||^2_{H_1} \, d \eta
\leq \frac{1}{2} \int_{\rline_{\eta}} || \widehat {\phi}_3(\lambda)||^2_{H_1} \, d \eta +
\frac{1}{2}
\int_{\rline_{\eta}} ||\widehat {w}(\lambda)||^2_{H_1} \, d \eta.
\]
Parseval identity implies
\be
\label{ineqpsi}
\left\Vert w\right\Vert_{L^2(0,T;H_1)}^2\le
\left\Vert  \phi_3\right\Vert_{L^2(0,T;H_1)}^2,
\ee
and with relation \rfb{SUMA}, we have
\be
\left\Vert w_3\right\Vert_{L^2(0,T;H_1)}^2\le 4
\left\Vert  \phi_3\right\Vert_{L^2(0,T;H_1)}^2.
\label{ADOUAPARTE}\ee
This achieves the proof.
\end{proof}

We can now prove  Theorem \ref{obscoup}.
\\
{\it Proof of the first assertion }.
All finite energy solutions
of \rfb{damped1}-\rfb{output} satisfy the estimate
\be
E(t)\le Me^{-\omega t}E(0),\FORALL t\ge 0,
\label{EXPONENTIAL}\ee
where $M,\omega>0$ are constants independent of $(w_1^0,w^1_1,w^0_2,w_3^0)$,
if and only if there exist a time
$T>0$ and a constant $C>0$ (depending on $T$) such that
$$
E(0)-E(T)\ge CE(0),\FORALL (w_1^0,w^1_1,w^0_2,w_3^0) \in H_{1,\half} \times H_1 \times H_2 \times H_1.
$$
By \rfb{ESTEN} relation above is equivalent to the inequality
$$
\int_0^T ||w_3(s)||_{H_1}^2ds\ge C \, E(0),\FORALL
(w_1^0,w_1^1,w_2^0,w_3^0) \in H_{1,\half} \times H_1 \times H_2 \times H_1.
$$
\relax From Lemma \ref{echivalenta} it follows that the system \rfb{damped1}-\rfb{output}
is exponentially stable if and only if
\[
\int_0^T ||\phi_3(s)||_{H_1}^2ds\ge C \, E(0),\FORALL
(w_1^0,w^1_1,w^0_2,w_3^0) \in H_{1,\half} \times  H_{1} \times  H_2 \times H_1
\]
holds true. It follows that \rfb{damped1}-\rfb{output}
is exponentially stable if and only if
\rfb{CONDUNb} holds true. This ends up
the proof of the first assertion of Theorem \ref{obscoup}.

\noindent
{\it Proof of the third assertion }. \\

We have that for all 
$(\phi^0_1,\phi^1_{1},\phi_2^0,\phi^1_3) \in {\cal H}$ 
\be
\label{h3}
\int_{0}^{T} ||\phi_3(t)||^2_{H_1} \, dt \geq C \, 
||(\phi^0_1,\phi_1^1,\phi_2^0,\phi^1_3)||^2_{{\cal H}_{- \alpha}}.
\ee
Then, by Lemma \ref{echivalenta}
combined with \rfb{h3} and \rfb{ESTENb} imply the existence of a constant $K>0$
such that  
\[
||(w_1(T),w_1^\prime(T),w_2(T),w_3(T))||^2_{{\cal H}} \le
||(w_1^0,w_1^1,w_2^0,w_3^0)||^2_{{\cal H}}
-K \frac{||(w_1^0,w_1^1,w_2^0,w_3^0)||^{2 + 2 \alpha}_{{\cal H}_{- \alpha}}}{||(w_1^0,w_1^1,w_2^0,w_3^0)||^{2 \alpha}_{{\cal H}}},
\]
\be
\FORALL (w_1^0,w_1^1,w_2^0,w_3^0)\in {{\cal D}({\cal A}_{d})}.
\label{PREINT}\ee
Estimate (\ref{PREINT}) remains valid in successive intervals 
$[kT, (k+1)T]$ and since ${\cal A}_d$ generates a semigroup of contractions in 
${\cal D}({\cal A}_d)$ and the graph norm on ${\cal D}({\cal A}_d)$ is 
equivalent to $||. ||_{{\cal H}_1}$. We obtain the existence 
of a constant $C > 0$ such that for all $k \geq 0$ we have
$$
||(w_1((k+1)T),w_1^\prime((k+1)T),w_2((k+1)T),w_3((k+1)T))||^2_{{\cal H}}\le
$$
$$
||(w_1(kT),w_1^\prime(kT),w_2(kT), w_3(kT))||^2_{{\cal H}} -
$$
$$
-C \, \frac{||(w_1((k+1)T),w_1^\prime((k+1)T),w_2((k+1)T),w_3((k+1)T))||^{2 + 2 \alpha}_{{\cal H}}}
{||(w_1^0,w_1^1,w_2^0,w_3^0)||_{{\cal D}({\cal A}_d)}^{2 \alpha}},
$$
\be
\FORALL (w_1^0,w_1^1,w_2^0,w_3^0)\in {\cal D}({\cal A}_d).
\label{SUC}\ee
If we adopt the notation
\be {\cal H}_k= \frac{||(w_1(kT),w_1^\prime(kT),w_2(kT),w_3(kT))||^2_{{\cal H}}}
{||(w_1^0,w_1^1,w_2^0,w_3^0)||^2_{{\cal D}({\cal A}_d)}},\label{NOTEN}\ee
relation (\ref{SUC}) gives
\be  {\cal H}_{k+1}\le{\cal H}_k-
C{\cal H}^{1 + \alpha}_{k+1},\ \forall k\ge 0.\label{REC}\ee
By applying the following lemma.

\begin{lemma}\cite[Lemma 5.2]{sicon} \label{genrussell}
Let $({\cal E}_k)$ be a sequence of positive real numbers satisfying
\be {\cal E}_{k+1}\le {\cal E}_{k}- C {\cal E}_{k+1}^{2 + \delta},\
\forall k\ge 0,\label{INEGREC}\ee
where $C > 0$ and $\delta > - 1$ are constants. Then there exists 
a positive constant $M$ such that
\be {\cal E}_{k}\le\frac{M}{(k+1)^{\frac{1}{1 + \delta}}},\
\forall k\ge 0.\label{CONREC}\ee
\end{lemma}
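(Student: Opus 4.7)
The plan is to reduce the superlinear recursion \eqref{INEGREC} to a linear growth statement about $1/{\cal E}_k^{1+\delta}$, following the classical Russell/Komornik trick for discrete Gronwall-type inequalities.

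First I would note that ${\cal E}_k$ is automatically non-increasing (since $C{\cal E}_{k+1}^{2+\delta}\ge 0$), so in particular ${\cal E}_k\le {\cal E}_0$ for all $k$. Rewriting \eqref{INEGREC} as
\[
{\cal E}_k\ge {\cal E}_{k+1}\bigl(1+C\,{\cal E}_{k+1}^{\,1+\delta}\bigr),
\]
and raising both sides to the power $-(1+\delta)<0$ (which reverses the inequality), I obtain
\[
{\cal E}_{k+1}^{-(1+\delta)}-{\cal E}_k^{-(1+\delta)}\ge {\cal E}_{k+1}^{-(1+\delta)}\Bigl[1-\bigl(1+C\,{\cal E}_{k+1}^{\,1+\delta}\bigr)^{-(1+\delta)}\Bigr].
\]

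Second, I would quantify the bracketed term. The quantity $x_k:=C{\cal E}_{k+1}^{1+\delta}$ belongs to the bounded interval $[0,B]$ with $B:=C{\cal E}_0^{1+\delta}$, so an elementary analysis of the function $\phi(x)=1-(1+x)^{-(1+\delta)}$ on $[0,B]$ (note that $\phi(0)=0$, $\phi'(0)=1+\delta>0$, and $\phi(x)/x$ is continuous and strictly positive on $[0,B]$, with a positive limit at $0$) yields a constant $\kappa=\kappa(\delta,B)>0$ such that $\phi(x)\ge \kappa x$ for all $x\in[0,B]$. Plugging this back in and using ${\cal E}_{k+1}^{-(1+\delta)}\cdot {\cal E}_{k+1}^{1+\delta}=1$ gives the clean one-step inequality
\[
{\cal E}_{k+1}^{-(1+\delta)}-{\cal E}_k^{-(1+\delta)}\ge \kappa C,\qquad \forall k\ge 0.
\]

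Third, summing this telescoping bound from $0$ to $k-1$ yields ${\cal E}_k^{-(1+\delta)}\ge {\cal E}_0^{-(1+\delta)}+\kappa C\,k\ge \kappa C\,(k+1)\cdot c_0$ for a suitable constant, whence
\[
{\cal E}_k\le \frac{M}{(k+1)^{1/(1+\delta)}}
\]
for $M$ depending only on $C,\delta,{\cal E}_0$, which is exactly \eqref{CONREC}.

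The main obstacle is verifying that the constant $\kappa$ in the elementary inequality $\phi(x)\ge\kappa x$ can be taken uniform over $[0,B]$: for $\delta\ge 0$ one can cite Bernoulli's inequality $(1+x)^{1+\delta}\ge 1+(1+\delta)x$ directly, but for $\delta\in(-1,0)$ the exponent $1+\delta$ lies in $(0,1)$ and that route fails, so I would instead invoke continuity of $\phi(x)/x$ together with its positive limit at the origin to secure a uniform positive lower bound on the compact interval $[0,B]$. All remaining steps are routine manipulations.
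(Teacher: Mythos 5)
Your proof is correct and complete; in particular, raising ${\cal E}_k\ge{\cal E}_{k+1}(1+C{\cal E}_{k+1}^{1+\delta})$ to the power $-(1+\delta)$ and extracting the uniform lower bound $\phi(x)\ge\kappa x$ on the compact interval $[0,B]$ yields the telescoping estimate ${\cal E}_{k+1}^{-(1+\delta)}-{\cal E}_k^{-(1+\delta)}\ge\kappa C$, which gives exactly \eqref{CONREC}, and your handling of the case $\delta\in(-1,0)$ (where Bernoulli's inequality is unavailable) is the one point a careless write-up would miss. The paper itself offers no proof of this lemma -- it only cites Lemma 5.2 of Ammari--Tucsnak -- and your argument is precisely the standard one underlying that reference, so there is nothing to compare beyond noting that you have supplied a self-contained proof where the paper supplies a citation.
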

and
using relation (\ref{REC}) we obtain
the existence of a constant $M>0$ such that
$$||(w_1(kT),w_1^\prime(kT),w_2(kT),w_3(kT))||^2_{{\cal H}}\le
\frac{M ||(w_1^0,w_1^1,w_2^0,w_3^0)||^2_{{\cal D}({\cal A}_d)}}{(k+1)^{\frac{1}{\alpha}}},\
\forall k\ge 0,$$
which obviously implies \rfb{polyestx}. 

\noindent
{\it Proof of the second assertion}. \\ 
The second assertion of Theorem \ref{obscoup} is equivalent to the following 

\be
\rho ({\cal A}_d)\supset \bigr\{i \beta \bigm|\beta \in \rline
\bigr\} \equiv i \rline, \label{1.8bv} \ee and \be \limsup_{|\beta|\to \infty }  \| (i\beta -{\cal A}_d)^{-1}\| <\infty
\label{1.9bv} 
\ee
implies that by a result of Borichev-Tomilov \cite{tomilov} that ${\cal A}$ satisfies the following two conditions: 
\be
\rho ({\cal A})\supset \bigr\{i \beta \bigm|\beta \in \rline
\bigr\} \equiv i \rline, \label{1.8b} \ee and \be \limsup_{|\beta|\to \infty } \, \frac{1}{\beta^{2}} \, \| (i\beta -{\cal A})^{-1}\| <\infty, 
\label{1.9b} 
\ee
where $\rho({\cal A})$, respectively $\rho({\cal A}_d),$ denotes the resolvent set of the operator
${\cal A}$, respectively of ${\cal A}_d$.

By assumption {\bf H} the conditions \rfb{1.8b}, \rfb{1.8bv} are
satisfied. Now for proving the above implication, suppose
that the condition \rfb{1.9b} is false. By the Banach-Steinhaus
Theorem, there exist a sequence of real
numbers $\beta_n \rightarrow\infty$ and a sequence of vectors
$Z_n= \begin{pmatrix} u_n \cr \varphi_n \cr \theta_n \end{pmatrix}\in {\cal D} ({\cal A})$ with
$\|Z_n\|_{\cH_0} = 1$ such that 
\be 
|| \beta_n^{2} \, (i \b_n I - {\cal
A})Z_n||_{\cH_0} \rightarrow 0\;\;\;\; \mbox{as}\;\;\;n\rightarrow
\infty, 
\label{1.12} 
\ee 
i.e., \be \beta_n^{2} \, \left( i \beta_n u_n - \varphi_n \right)
\rightarrow 0 \;\;\; \mbox{in}\;\; H_{1,\half}, \label{1.13}
 \ee 
 \be
 \beta_n^{2} \left( i \b_n \varphi_n + A_1 u_n + C \theta_n \right)  \rightarrow 0 \;\;\;
\mbox{in}\;\; H_{1},
\label{1.13bv} 
\ee
 \be 
\beta_n^{2} \, \left( i \b_n \theta_n + A \theta_n - C^* \varphi_n \right)  \rightarrow 0 \;\;\;
\mbox{in}\;\; H_{2}.  
\label{1.14} 
\ee 
We notice that we have
\be 
|| \beta_n^{2} (i \b_n I - {\cal A})Z_n||_{\cH_0} \ge |\Re \left(\langle \beta_n^{2} \, (i
\beta_n I - {\cal A})Z_n, Z_n\rangle_{\cH_0} \right)|. 
\label{1.15}
\ee
Then, by
\rfb{1.12} 
$$
\beta_n \, A_2^* \theta_n \rightarrow 0, \,  A_2^* \theta_n \rightarrow 0.
$$
Let $q_n = A_2^* \theta_n$,
\be
i \beta_n q_n + \frac {1}{\tau} q_n - A_2^* \theta_n \rightarrow 0, 
\label{1.16} \ee 
which implies that 
\be i \beta_n u_n - \varphi_n 
\rightarrow 0 \;\;\; \mbox{in}\;\; H_{1,\half}, \label{1.13b}
 \ee 
 \be
  i \b_n \varphi_n + A_1 u_n + C \theta_n   \rightarrow 0 \;\;\;
\mbox{in}\;\; H_{1},
\label{1.13bb} 
\ee
 \be 
 i \b_n \theta_n + A_2q_n - C^* \varphi_n  \rightarrow 0 \;\;\;
\mbox{in}\;\; H_{2}.  
\label{1.14b} 
\ee 
\be 
 i \b_n q_n  + \frac{1}{\tau} q_n -  A_2^*\theta_n  \rightarrow 0 \;\;\;
\mbox{in}\;\; H_{1}.  
\label{1.14bn} 
\ee 
i.e. 
$\tilde{Z}_n = \begin{pmatrix} u_n \cr \varphi_n \cr \theta_n \cr q_n \end{pmatrix}\in {\cal D} ({\cal A}_d)$ with
$\|\tilde{Z}_n\|_{\cH}$ bounded such that 
\be 
||  (i \b_n I - {\cal
A}_d)\tilde{Z}_n||_{\cH} \rightarrow 0\;\;\;\; \mbox{as}\;\;\;n\rightarrow
\infty, 
\label{1.12b} 
\ee 
which implies that \rfb{1.9bv} is false and ends the proof of the second assertion of Theorem \ref{obscoup}.

\noindent
{\it Proof of the fourth assertion of Theorem \ref{obscoup}}. \\ 
By the same way as above, we can prove the fourth assertion of Theorem \ref{obscoup}, i.e., 

the fourth assertion of Theorem \ref{obscoup} is equivalent to following: 
For $\alpha > 0,$
\be
\rho ({\cal A}_d)\supset \bigr\{i \beta \bigm|\beta \in \rline
\bigr\} \equiv i \rline, \label{1.8bvx} \ee and \be \limsup_{|\beta|\to \infty } \frac{1}{\beta^{2 \alpha}} \, \| (i\beta -{\cal A}_d)^{-1}\| <\infty, 
\label{1.9bvx} 
\ee
implies that by a result of Borichev-Tomilov \cite[Theorem 2.4 ]{tomilov} that ${\cal A}$ satisfies the following two conditions: 
\be
\rho ({\cal A})\supset \bigr\{i \beta \bigm|\beta \in \rline
\bigr\} \equiv i \rline, \label{1.8bx} \ee 
and \be \limsup_{|\beta|\to \infty }   \frac{1}{\beta^{2 \alpha + 2}} \, \| (i\beta -{\cal A})^{-1}\| <\infty.
\label{1.9bx} 
\ee


\section{Applications to stabilization for a thermo-elastic system}\label{appl}
\setcounter{equation}{0}

\subsection{ \bf First example} \label{sec122}

We consider the following initial and boundary problem
\be
\label{appl122}
 \left\{
\begin{array}{ll}
\ddot u_1 - \partial_x^2 u_1+ \partial_x u_2= 0, \, (0,+\infty) \times (0,1),\\
\dot u_2 - \partial_x u_3 +  \partial_x\dot u_1 =0, \, (0,+\infty) \times (0,1),\\
\tau \dot u_3 - \partial_x u_2 + u_3 = 0, \, (0,+\infty) \times (0,1),\\
u_1(t,0) = u_1(t,1) =0, \, (0,+\infty),\\
u_3(t,0) = u_3(t,1) = 0, \, (0,+\infty),\\
u_1(0,x) = u_1^0(x), \, \dot u_1(0,x) = u_1^1, \, u_2(0,x) = u_2^0, \, u_3(0,x) = u_3^0, \, x \in (0,1),
 \end{array}
\right.
\ee
where $0 < \tau$ and satisfies $\sqrt{\frac{\tau}{1 + \tau}} \notin \mathbb{Q}$. In this case, we have:
\[
H_1 = H_2 = L^2(0,1), \; H_{1,\half} = H^1_0(0,1),
\]
and
$$
A_1 = - \, \frac{d^2}{dx^2}, \, {\cal D}(A_1) = H^2(0,1) \cap H^1_0(0,1), \, A_2 = - \, \frac{d}{dx}, \, {\cal D}(A_2) = H^1(0,1),
$$
$$
A_2^* = \frac{d}{dx}, \, {\cal D}(A_2^*) = H^1_0(0,1),\, C = \frac{d}{dx} : H^1(0,1) \rightarrow L^2(0,1), 
$$
\be
\label{op122}
C^* = - \, \frac{d}{dx} : H^1_0(0,1) \rightarrow L^2(0,1).
\ee
Then, ${\cal A}_{d}$ is given by
$$
{\cal A}_d : {\cal D}({\cal A}_d) \rightarrow H_0^1(0,1) \times L^2(0,1) \times L^2(0,1) \times L^2(0,1), 
$$
$$
{\cal A}_d = \left(
\begin{array}{cccc}
0 & I &  0 & 0\\
\frac{d^2}{dx^2} & 0 & - \frac{d}{dx} & 0\\
0 & - \frac{d}{dx} & 0 & \frac{d}{dx} \\
0 & \frac{1}{\tau} \frac{d}{dx} & 0 & - \frac{1}{\tau} I
\end{array}
\right),
$$
where
\[
{\cal D}({\cal A}_d) = \left[  H^2(0,1) \cap H^1_0(0,1)\right] \times H^1_0(0,1) \times H^1(0,1) \times H^1_0(0,1).
\]
Stability results for \rfb{appl122}, for $\tau = 0$, are then a consequence of
Theorem \ref{obscoup}. 

In this case the problem \rfb{eq3b}-\rfb{eq4b} becomes
\be
\label{HOM12}
\ddot \phi_1 - \partial^2_x \phi_1 +  \partial_x \phi_2 = 0, \, (0,1) \times
(0, + \infty),
\ee
\be
\label{hom112}
\dot \phi_2 - \partial_x \phi_3 + \partial_x \dot \phi_1 = 0, \, (0,1) \times
(0, + \infty),
\ee
\be
\label{cat2}
\tau \dot \phi_3 - \partial_x \phi_2 = 0, \, (0,1) \times
(0, + \infty),
\ee
\be
\label{HOM22}
\phi_i(0,t) = \phi_i(1,t) = 0, (0, + \infty),\, i=1,3,
\ee
\be
\label{HOM32}
\phi_i(x,0) = u^0_i(x), \, \dot \phi_1 (x,0) = u^1_1(x), \, (0,1), \, i=1,2,3.
\ee
The observability
inequality concerning the solutions
of \rfb{HOM12}-\rfb{HOM32} is given in the proposition below.
\begin{proposition}\label{obs22}
Let $T> 2$ be fixed. Then the following assertions hold true.

The solution $(\phi_1,\phi_2,\phi_3)$ of  \rfb{HOM12}-\rfb{HOM32} satisfies
$$
\int_0^T \int_0^1 \left|\phi_3(x,t)\right|^2 dx \, dt
\ge C \, \Vert (u^0_1,u^1_1,u^0_2,u_3^0)\Vert_{H^1_0(0,1) \times L^2(0,1) \times L^2(0,1) \times L^2(0,1)}^2,
$$
\be
 \forall (u^0_1, u^1_1,u^0_2,u_3^0) \in \dot{\cal H},
\label{OBSHOM12}
\ee
where $C >0$ is a constant and $$\dot{\cal H} = \left\{(u_1,u_2,u_3,u_4) \in H^1_0(0,1) \times L^2(0,1) \times L^2(0,1) \times L^2(0,1), \, \int_0^1 u_3(x) \, dx = 0 \right\} = 
$$
$$
\left\langle (0,0,1,0)^t\right\rangle^\bot.
$$
\end{proposition}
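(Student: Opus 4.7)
The plan is to reduce the conservative adjoint system \rfb{HOM12}-\rfb{HOM32} to a family of uncoupled finite-dimensional ODEs via Fourier series in $x$, and then to apply a generalised Ingham inequality to the resulting spectrum. Expand $\phi_1(x,t)=\sum_{n\ge1}u_n(t)\sqrt 2\sin(n\pi x)$, $\phi_3(x,t)=\sum_{n\ge1}v_n(t)\sqrt 2\sin(n\pi x)$ and $\phi_2(x,t)=w_0(t)+\sum_{n\ge1}w_n(t)\sqrt 2\cos(n\pi x)$, which is compatible with the Dirichlet boundary conditions on $\phi_1,\phi_3$. Projecting \rfb{hom112} onto the constant mode yields $\dot w_0=0$, so $w_0(t)\equiv w_0(0)$ is conserved in time and does not appear in the sine expansion of $\phi_3$; this identifies $(0,0,1,0)^t$ as the unique unobservable direction, which is precisely the one removed by restricting initial data to $\dot{\cal H}$.

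For each $n\ge1$ the Fourier coefficients satisfy the conservative $4$-dimensional linear system
\begin{equation*}
\ddot u_n+(n\pi)^2u_n-n\pi w_n=0,\quad \dot w_n+n\pi\dot u_n-n\pi v_n=0,\quad \tau\dot v_n+n\pi w_n=0,
\end{equation*}
with characteristic polynomial $\tau\lambda^4-(1+2\tau)(n\pi)^2\lambda^2+(n\pi)^4=0$. The four roots are purely imaginary, $\lambda=\pm i n\pi a_j$ for $j=1,2$, where $a_1^2<a_2^2$ are the two positive roots of $\tau a^4-(1+2\tau)a^2+1=0$; in particular $a_1 a_2=1/\sqrt\tau$ and $a_1^2+a_2^2=(1+2\tau)/\tau$. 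Diagonalising and writing $v_n(t)=\sum_{j,\pm}c_n^{j,\pm}e^{\pm i n\pi a_j t}$, a direct computation of the eigenvectors shows that the passage from the mode-$n$ initial data $(u_n^0,u_n^1,w_n^0,v_n^0)$ to the amplitudes $(c_n^{j,\pm})$ is bijective with condition number uniformly bounded in $n$.

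By Parseval in $x$, $\int_0^T\!\!\int_0^1|\phi_3|^2\,dx\,dt=\sum_{n\ge1}\int_0^T|v_n(t)|^2\,dt$, so observability reduces to a uniform mode-wise lower bound for $\int_0^T|v_n|^2\,dt$ in terms of $\sum_{j,\pm}|c_n^{j,\pm}|^2$. The frequency set $\Lambda=\{\pm n\pi a_j:n\ge1,\,j=1,2\}$ is the union of two arithmetic progressions with gaps $\pi a_1,\pi a_2$. The hypothesis $\sqrt{\tau/(1+\tau)}\notin\mathbb Q$ translates, via the identities above, into the fact that the two progressions are disjoint; a further Diophantine analysis exploiting the explicit algebraic form of $a_1,a_2$ upgrades disjointness to a quantitative asymptotic separation $\inf_{n,m\ge1}|n\pi a_1-m\pi a_2|>0$. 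A generalised Ingham inequality of Haraux-Jaffard/Baiocchi-Komornik type then yields the desired mode-wise bound uniformly in $n$, for $T$ above the corresponding Ingham threshold, and summing in $n$ together with the modal bijectivity from the previous paragraph reconstructs the full energy norm on the right-hand side and produces \rfb{OBSHOM12}.

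The principal obstacle is the Ingham step: the non-resonance hypothesis alone guarantees only disjointness of the two progressions, and upgrading this to a uniform asymptotic gap requires the Diophantine analysis alluded to above, using that $a_1,a_2$ are explicit algebraic functions of $\tau$. A secondary, more bookkeeping-style point is that the natural threshold coming from the slower progression is $T>2/a_1$, which matches $T>2$ only in the limit $\tau\to 0$ (where $a_1\to 1$); for fixed $\tau>0$ the condition ``$T>2$'' in the statement should thus be read as ``$T>2/a_1$''.
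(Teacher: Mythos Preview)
Your overall strategy---Fourier decomposition in $x$, explicit computation of the mode-wise spectrum, and an Ingham-type inequality---is exactly the paper's. The paper expands the initial data in the eigenbasis of ${\cal A}_c$, writes $\phi_3$ as a nonharmonic Fourier series in $t$ with spatial profiles $\sin(n\pi x)$, and then invokes Ingham's inequality (citing Komornik--Loreti) to obtain \rfb{OBSHOM12}.

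There is, however, a genuine error in your Ingham step. The assertion that ``a further Diophantine analysis \ldots\ upgrades disjointness to $\inf_{n,m\ge1}|n\pi a_1-m\pi a_2|>0$'' is false: for \emph{every} irrational ratio $a_1/a_2$ the set $\{na_1-ma_2:n,m\in\nline\}$ is dense in $\rline$, so this infimum is $0$. Algebraicity of $a_1,a_2$ yields at best a Liouville-type lower bound $|na_1-ma_2|\ge c\max(n,m)^{-d}$, never a uniform positive one. The global frequency set $\Lambda$ therefore fails the classical Ingham gap condition, and no amount of Diophantine input will repair this.

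The point is that you never need the global set $\Lambda$. You yourself note that Parseval in $x$ gives
\[
\int_0^T\!\!\int_0^1|\phi_3|^2\,dx\,dt=\sum_{n\ge1}\int_0^T|v_n(t)|^2\,dt,
\]
so distinct spatial modes do not interact and it suffices to bound each $\int_0^T|v_n|^2$ from below by the mode-$n$ energy, uniformly in $n$. But $v_n$ involves only the \emph{four} exponentials $e^{\pm in\pi a_1 t},\,e^{\pm in\pi a_2 t}$, whose minimal separation is $n\pi\min(2a_1,\,a_2-a_1)\ge\pi\min(2a_1,\,a_2-a_1)>0$ uniformly in $n$ (indeed it tends to infinity). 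A four-term Ingham inequality---or simply a direct Gram-matrix argument for four exponentials, combined with your uniform condition-number observation on the modal eigenbasis---then yields the required uniform mode-wise bound for any $T$ exceeding the fixed threshold $2/\min(2a_1,a_2-a_1)$. No Diophantine analysis is needed; since the discriminant $(1+2\tau)^2-4\tau=1+4\tau^2$ is strictly positive, one always has $a_1\ne a_2$, which is all the mode-wise argument requires. Your concern about the threshold $T>2$ is legitimate, but the correct value comes from this four-frequency gap, not from $2/a_1$.
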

\begin{proof}
If we put
$$
\left(
\begin{array}{lll}
u^0_1
\\
u^1_1
\\
u^0_2
\\
u_3^0
\end{array}
\right) \in \dot{\cal H},\, i.e.,
\left(
\begin{array}{lll}
u^0_1
\\
u^1_1
\\
u^0_2
\\
u_3^0
\end{array}
\right)(x) = \sum_{n \in \mathbb{Z}^*} a_n  \varphi_n(x) 
$$
where  $(a_n)_{n \in \mathbb{Z}^*} \in l^2$, and 
$$
\varphi_n (x) = \left(
\begin{array}{lll}
 \sin (n \pi x)
\\
\lambda_n \, \sin(n \pi x)\\
- \, n\pi \, \left( \frac{1}{\tau \, \lambda_n} (\lambda_n - \frac{n\pi}{\lambda_n}) + 1 \right) \, \cos (n \pi x) \\
\frac{1}{\tau} \left( \lambda_n - \frac{n \pi}{\lambda_n} \right) \, \sin(n \pi x)
\end{array}
\right), \, n \in \mathbb{Z}^*,
$$
with
$$
(\lambda_n)_{n \in \mathbb{Z}^*} = \left\{i n \pi \sqrt{\frac{1 + \tau}{\tau}}, \, n \in \mathbb{Z}^* \right\} \cup \left\{i n \pi, \, n \in \zline^* \right\}.
$$
Then, we clearly have
\be
\phi_3(x,t)=
\sum_{n \in \mathbb{Z}^*} a_n \, \frac{1}{\tau} \left( \lambda_n - \frac{n \pi}{\lambda_n} \right) \, e^{\lambda_n t} \, \sin(n \pi x).
\label{1FORMEX0}
\ee
From Ingham's inequality (see Ingham \cite{ingham}) we obtain,
for all $T> 2$,
 the existence of a constant $C_T>0$ such that the solution $(\phi_1,\phi_2,\phi_3)$ of
\rfb{HOM12}-\rfb{HOM32} satisfies
\be
\int_0^T \int_0^1 \left|\phi_3(x,t)\right|^2 \, dx \, dt\ge
C_T \, \sum_{n \in \, \mathbb{Z}^*} | \lambda_n \, a_n|^2,
\label{1INGDIR}
\ee
which is exactly \rfb{OBSHOM12}.
\end{proof}

Now, as an immediate consequence of
Theorem \ref{obscoup} we have the following stability result for $(u_1,\dot{u}_1,u_2)$ solution of \rfb{appl122} with $\tau=0$.

\begin{proposition}
There exists a constant $C > 0$ such that for all 
$ (u^0_1,u_1^1,u_2^0) \in {\cal D}(\dot{{\cal A}}) = {\cal D}({\cal A}) \cap \dot{{\cal H}},$
$$
\left\| (u_1,\dot{u}_1,u_2) \right\|_{{\cal H}} \leq \frac{C}{\sqrt{t}} \left\| (u^0_1,u^1_1,u_2^0)\right\|_{{\cal D}({\cal A})}, \, \forall \, t > 0.
$$
\end{proposition}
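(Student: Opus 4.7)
The plan is to combine the abstract Theorem \ref{obscoup}(1)--(2) with the concrete observability inequality of Proposition \ref{obs22}, after passing to the invariant subspace $\dot{\cal H}$.

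First I would verify that $\dot{\cal H}$ is invariant under both the Cattaneo semigroup generated by ${\cal A}_d$ and the Fourier-law semigroup generated by ${\cal A}$. Integrating the equation governing $u_2$ in each system over $(0,1)$ and using the homogeneous boundary conditions on $u_1$ and $u_3$ yields $\frac{d}{dt}\int_0^1 u_2(x,t)\,dx = 0$, so the mean of $u_2$ is conserved and the orthogonal complement $\dot{\cal H}$ of the constant-$u_2$ mode is stable under both evolutions. I denote the corresponding restrictions by $\dot{\cal A}_d$ and $\dot{\cal A}$, and set ${\cal D}(\dot{\cal A}) := {\cal D}({\cal A}) \cap \dot{\cal H}$.

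Next I would apply the first assertion of Theorem \ref{obscoup} on $\dot{\cal H}$: the lower bound of Proposition \ref{obs22}, combined with the admissibility-type upper bound coming from Lemma \ref{reg1}, provides the full two-sided observability estimate \rfb{CONDUNb} on $\dot{\cal H}$ for any $T>2$. Hence the Cattaneo-law problem \rfb{appl122} is exponentially stable on $\dot{\cal H}$. The second assertion of Theorem \ref{obscoup} then yields the claimed $t^{-1/2}$ polynomial decay of the Fourier-law system for every initial datum in ${\cal D}(\dot{\cal A})$.

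The main obstacle is verifying Assumption \textbf{H} for the restricted Fourier operator $\dot{\cal A}$: on the full space ${\cal H}_0$, the constant-$u_2$ mode forces $0\in\sigma({\cal A})$, which is precisely why the passage to $\dot{\cal H}$ is necessary. Using the explicit sine/cosine diagonalisation underlying the proof of Proposition \ref{obs22} (and the analogous Dirichlet diagonalisation for the Fourier problem), one checks that once this mode is excluded, no purely imaginary number is a spectral value of $\dot{\cal A}$, so $i\rline \subset \rho(\dot{\cal A})$ and Theorem \ref{obscoup}(2) applies legitimately.
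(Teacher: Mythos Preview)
Your proposal is correct and follows exactly the route the paper intends: Proposition \ref{obs22} supplies the observability lower bound \rfb{CONDUNb} on $\dot{\cal H}$, Theorem \ref{obscoup}(1) then gives exponential stability of the Cattaneo system there, and Theorem \ref{obscoup}(2) yields the $t^{-1/2}$ decay for the Fourier system. The paper states this as ``an immediate consequence of Theorem \ref{obscoup}'' without spelling out the invariance of $\dot{\cal H}$ or the verification of Assumption {\bf H}, so your additional care on those points is a welcome expansion rather than a departure.
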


\begin{remark}
We can obtain the same result, as above, by application of an exponential stability result obtained 
by Racke for \rfb{appl122} in \cite[Theorem 2.1]{rackeb} and Theorem \ref{obscoup}. 
\end{remark}

\subsection{\bf Second example} \label{sec123}
Let $\Omega$ be a bounded smooth domain of $\rline^2$. 
We consider the following initial and boundary problem:
\be
\label{appl122x}
 \left\{
\begin{array}{ll}
\ddot u - \mu \, \Delta u -  (\lambda + \mu) \nabla div u + \nabla \theta = 0, \, (0,+\infty) \times \Omega,\\
\dot \theta + div q + div \dot u =0, \, (0,+\infty) \times (0,1),\\
\tau \dot q + \nabla \theta  + q = 0, \, (0,+\infty) \times (0,1),\\
u =0, \theta =0,  \, \Omega \times (0,+\infty),\\
u(0,x) = u^0(x), \, \dot u(0,x) = u^1(x), \, \theta(0,x) = \theta^0, \, q(0,x) = q^0(x), \, x \in \Omega,
 \end{array}
\right.
\ee
The parameters $\tau, \mu, \lambda$ are positive constants which satisfy $\lambda + 2 \mu > 0$.

In this case, we have:
\[
H_1 = (L^2(\Omega))^2, H_2 = L^2(\Omega), \; H_{1,\half} = (H^1_0(\Omega))^2,
\]
and
$$
A_1 = - \mu \, \Delta - (\mu + \lambda) \nabla div, \, {\cal D}(A_1) = (H^2(\Omega) \cap H^1_0(\Omega))^2, \, A_2 =  div, \, {\cal D}(A_2) = H^1(\Omega),
$$
$$
A_2^* = - \nabla, \, {\cal D}(A_2^*) = (H^1_0(\Omega))^2,\, C = \nabla : H^1(\Omega) \rightarrow (L^2(\Omega))^2, 
$$
\be
\label{op122x}
C^* = - \, div : (H^1_0(\Omega))^2 \rightarrow L^2(\Omega).
\ee
Then, ${\cal A}_{d}$ is given by
$$
{\cal A}_d : {\cal D}({\cal A}_d) \rightarrow (H_0^1(\Omega) \times L^2(\Omega))^2 \times L^2(\Omega) \times (L^2(\Omega))^2, 
$$
where
\[
{\cal D}({\cal A}_d) = \left[  H^2(\Omega) \cap H^1_0(\Omega)\right]^2 \times (H^1_0(\Omega))^2 \times H^1_0(\Omega) \times (H^1(\Omega))^2.
\]
Stability result for \rfb{appl122x}, with $\tau =0$, are then an immediate consequence of
Theorem \ref{obscoup} and of \cite[Theorem 3.1]{rr}. We have the following result

\begin{proposition}
Let $\Omega$ be a a radially symmetric and let the initial data $(u^0,u^1,\theta^0, q^0)$ be radially symmetric \footnote{see \cite[page 327]{racke} for definitions.}. Then, there exists a constant $C > 0$ such that for all 
$ (u^0,u^1,\theta^0) \in [H^2(\Omega) \cap H^1_0(\Omega)] \times H^1_0(\Omega) \times [H^2(\Omega) \cap H^1(\Omega)],$
$$
\left\| (u,\dot{u},\theta) \right\|_{{\cal H}} \leq \frac{C}{\sqrt{t}} \left\| (u^0,u^1,\theta^0)\right\|_{[H^2(\Omega) \cap H^1_0(\Omega)]^2 \times H^1_0(\Omega) \times [H^2(\Omega) \cap H^1_0(\Omega)]}, \, \forall \, t > 0.
$$
\end{proposition}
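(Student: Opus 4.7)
The plan is to apply the second assertion of Theorem~\ref{obscoup} to this concrete system, using a known exponential stability result for the Cattaneo-type thermoelastic system as input. The argument has three stages: (i) recast the PDE \eqref{appl122x} in the abstract framework; (ii) invoke the exponential stability of the Cattaneo system on the radial subspace; (iii) transfer exponential decay to polynomial decay of rate $t^{-1/2}$ for the Fourier limit $\tau=0$.

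First, I would verify that the abstract setup of Section~\ref{intro} matches the PDE. With $H_1=(L^2(\Omega))^2$, $H_2=L^2(\Omega)$ and the operators $A_1$, $A_2$, $A_2^*$, $C$, $C^*$ given in \eqref{op122x}, one checks $A_2A_2^*=-\mathrm{div}\,\nabla=-\Delta$ with Dirichlet boundary condition, so $A=-\Delta$ is positive self-adjoint on $H_2$ as required. The adjointness relations $C^*=-\mathrm{div}$ on $(H^1_0(\Omega))^2$ and $A_2^*=-\nabla$ on $H^1_0(\Omega)$ match, and the abstract system \eqref{damped1}--\eqref{output} reduces exactly to \eqref{appl122x}. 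Note that the radial subspace of $\mathcal{H}$ is invariant under ${\cal A}_d$ (the Lamé, divergence, gradient and Laplace operators all commute with the rotation group on a radially symmetric $\Omega$), and analogously for ${\cal A}$ acting on $\mathcal{H}_0$. Thus we may, and shall, work inside these invariant radial subspaces.

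Next, by \cite[Theorem~3.1]{rr} applied on the radial subspace, the semigroup generated by ${\cal A}_d$ decays exponentially on radial data; equivalently, the observability inequality \eqref{CONDUNb} holds for radial initial data. Before invoking assertion~2 of Theorem~\ref{obscoup}, one must check \textbf{Assumption~H}, i.e.\ $i\rline\subset\rho({\cal A})$ restricted to the radial subspace. This follows by a standard argument: the resolvent equation $(i\beta-{\cal A})Z=F$ with radial $F$ admits a unique radial solution because the decoupled second-order elliptic problem arising from eliminating $\varphi$ and $\theta$ is coercive on the radial subspace, and $0\in\rho({\cal A})$ because $A_1$ has a bounded inverse and the coupling is compact. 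Consequently the hypotheses of the second assertion of Theorem~\ref{obscoup} are satisfied on the radial subspace.

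Finally, assertion~2 yields the polynomial estimate \eqref{polyest} on the radial subspace, which, transcribed into the notation of the present section with $(w_1,w_2)=(u,\theta)$ and $\mathcal{H}_0=[H^1_0(\Omega)]^2\times (L^2(\Omega))^2\times L^2(\Omega)$, is exactly the claimed $t^{-1/2}$ decay. The main obstacle I foresee is not the abstract transfer step, which is essentially plug-and-play once the framework is matched, but the verification of \textbf{Assumption~H} on the radial subspace: one has to show that $i\rline\cap\sigma({\cal A})=\emptyset$ for the Lamé-thermoelastic operator in $2$D, which for general $\Omega$ would be delicate, but under radial symmetry reduces to a $1$D Sturm--Liouville spectral problem where the absence of purely imaginary eigenvalues is a consequence of the analysis in \cite{rr}.
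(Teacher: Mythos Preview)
Your proposal is correct and follows exactly the paper's own approach: the paper's proof is a single sentence stating that the result is an immediate consequence of Theorem~\ref{obscoup} together with \cite[Theorem~3.1]{rr}, and you have simply fleshed out the details of that application (fitting the PDE into the abstract framework, invoking Racke's exponential stability on the radial subspace, and transferring via assertion~2). Your additional care about checking Assumption~\textbf{H} and the invariance of the radial subspace is a welcome elaboration that the paper itself leaves implicit.
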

\begin{remark}
We remark that we obtain the same stability result as Lebeau-Zuazua in \cite{lebeauzuazua2}.
\end{remark}

\end{document}